\definecolor{MyLinkColor}{rgb}{0,0,0.4}
\newcommand{\R}{{\mathbb R}}
\newcommand{\E}{{\mathcal E}}
\newcommand{\N}{{\mathbb N}}
\newcommand{\B}{\mathcal{B}}
\newcommand{\V}{\mathcal{V}}
\newcommand{\kL}{\mathcal{L}}
\newcommand{\wh}{\widehat}
\newcommand{\p}{\partial}
\newcommand{\A}{\mathcal{A}}
\newcommand{\e}{\varepsilon}
\newtheorem{thm}{Theorem}[section]
\newtheorem{lemma}[thm]{Lemma}
\theoremstyle{remark} 
\newtheorem{rem}[thm]{Remark}
\numberwithin{equation}{section}
\title[A degenerate parabolic system modelling  flows in porous media]{Existence and stability of weak solutions for a degenerate parabolic system modelling two-phase flows in porous media}
\thanks{ Partially supported by the French-German procope project 20190SE}
\author[J. Escher]{Joachim Escher}
\address{Institut f{\"u}r Angewandte Mathematik, Leibniz Universit{\"a}t Hannover, Welfengarten~1, 30167 Hannover, Germany.}
\email{escher@ifam.uni-hannover.de}
\author[Ph. Lauren\c cot]{Philippe Lauren\c cot}
\address{Institut  de Math\'ematiques de Toulouse, CNRS UMR 5219, Universit\'e de Toulouse, F-31062  Toulouse cedex 9, France}
\email{laurenco@math.univ-toulouse.fr}
\author[B.--V. Matioc]{Bogdan--Vasile Matioc}
\address{Institut f{\"u}r Angewandte Mathematik, Leibniz Universit{\"a}t Hannover, Welfengarten~1, 30167 Hannover, Germany.}
\email{matioc@ifam.uni-hannover.de}
\subjclass[2010]{ 35K65; 35K40; 35D30; 35B35;  35Q35}
\keywords{Degenerate parabolic system,  weak solutions, exponential stability, thin film, { Liapunov functional}}
\begin{document}

\begin{abstract}
We prove global existence of nonnegative weak solutions to a degenerate parabolic system which models the interaction of two thin fluid films in a porous medium. Furthermore, we show that these weak solutions converge at an exponential rate towards flat equilibria.
\end{abstract}

\maketitle

%%% SECTION: INTRO %%%

%%%%%%%%%%%%%%%%%%%%%%%%%%%%%%%%%%%%%%%%
%%%%%%%%%%%%%%%%%%%%%%%%%%%%%%%%%%%%%%%%
\section{Introduction}
%%%%%%%%%%%%%%%%%%%%%%%%%%%%%%%%%%%%%%%%
%%%%%%%%%%%%%%%%%%%%%%%%%%%%%%%%%%%%%%%%

In this paper we consider the following system of degenerate parabolic equations 
\begin{equation}\label{eq:S1}
\left\{
\begin{array}{llll}
\p_t f=&\p_x\left(   f\p_xf\right)+R\p_x\left(f\p_xh\right),\\[1ex]
\p_th=&\p_x\left(   f\p_xf\right)+R_\mu\p_x\left[  (h-f) \p_xh\right]+R \p_x\left(  f\p_xh\right),
\end{array}
\right.
{ \quad (t,x)\in (0,\infty)\times (0,L),}
\end{equation}
which models two-phase flows in porous media { under the assumption that the thickness of the two fluid layers is small.
Indeed, the} system \eqref{eq:S1} has been obtained in \cite{EMM2}  by passing to the limit of small layer thickness  in the Muskat problem studied in \cite{EMM1} (with  homogeneous Neumann boundary condition).
Similar methods to those presented in \cite{EMM2} have been used in \cite{GP} and \cite{MP}, 
where it is rigorously shown that, in the absence of gravity, appropriate scaled classical solutions of the Stokes' and  one-phase Hele-Shaw  problems with surface tension  converge to solutions of  thin film equations
\begin{equation*}
\partial_th+\p_x(h^a\p_x^3h)=0,
\end{equation*} 
with $a=3$ for Stoke's problem and $a=1$ for the Hele-Shaw problem.

In our setting $f$ is a nonnegative function expressing the height of the interface between the fluids { while $h\geq f$ is the height of the interface separating the fluid located in the upper part of the porous medium from air,} cf. Figure~\ref{F:1}. 
\begin{figure}
$$\includegraphics[width=10cm]{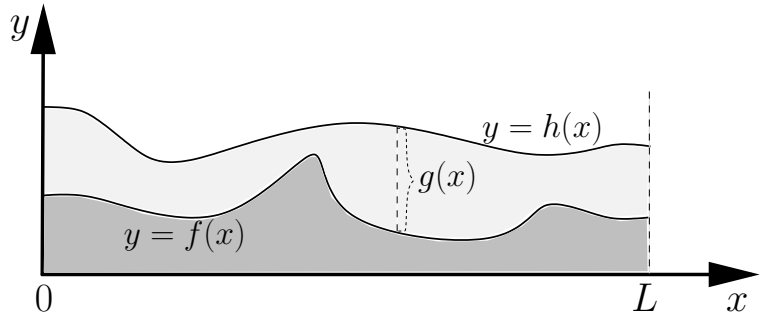}$$
%$$\includegraphics[width=10cm]{Figure.pdf}$$
\caption{The physical setting}
\label{F:1}
\end{figure}
We assume that the bottom of the porous medium, which is located at  $y=0,$  is impermeable and that the air is at constant  pressure normalised to be zero.  { The parameters $R$ and $R_\mu$ are given by 
\[
R:=\frac{\rho_+}{\rho_--\rho_+}\quad\text{and}\quad R_\mu:=\frac{\mu_-}{\mu_+}R\,,
\]
where $\rho_-$, $\mu_-$ [resp. $\rho_+$, $\mu_+$] denote the density and viscosity of the fluid located below [resp. above] in the porous medium.} Of course, we have to supplement system \eqref{eq:S1} with initial conditions
\begin{equation}\label{eq:bc1}
 f(0)=f_0,\qquad h(0)=h_0, { \quad x\in (0,L),}
 \end{equation}
and we impose no-flux boundary conditions at $x=0$ and $x=L$:
\begin{equation}\label{eq:bc2}
 \p_xf=\p_xh=0,\qquad x=0, L.
 \end{equation}
It turns out that  the system is parabolic if we assume that $h_0>f_0>0$ and $R>0,$ that is, the  denser fluid lies beneath. 
Existence and uniqueness of classical solutions to \eqref{eq:S1} have been established in this parabolic setting  in \cite{EMM2}.
Furthermore, it is also shown that the steady states of \eqref{eq:S1} are flat and that they attract at 
 an exponential rate in $H^2$ solutions which are initially close by.

In this paper we are interested in the degenerate case which appears when we allow $f_0=0$  and $h_0=f_0$ on some subset of $(0,L).$
 Owing to the loss of uniform parabolicity, existence of classical solutions can no longer be established by using  parabolic theory and we have to work within an appropriate  weak setting.
Furthermore, the system  is quasilinear and, as a further difficulty, each equation contains highest order derivatives of both unknowns { $f$ and $h$,} i.e. it is strongly coupled. 
In order to study the problem \eqref{eq:S1} we shall employ some  of the methods used { in \cite{Su2}} to investigate the spreading of insoluble surfactant.
However,  in our case
the situation is more involved since we have two sources of degeneracy, namely when  $f$ and $g:=h-f$ become zero.
It turns out that by choosing  $(f,g)$ as unknowns, the system \eqref{eq:S1} is more symmetric: 
 \begin{equation}\label{eq:S2}
\left\{
\begin{array}{llll}
\p_t f=&(1+R)\p_x\left(   f\p_xf\right)+R\p_x\left(f\p_xg\right),\\[1ex]
\p_tg=&R_\mu\p_x\left(   g\p_xf\right)+R_\mu \p_x\left(g\p_xg\right),
\end{array}
\right.
{  \quad (t,x)\in (0,\infty)\times (0,L),}
\end{equation}
since, up to  multiplicative constants, the first equation can be obtained from the second by simply interchanging $f$ and $g$.
Corresponding to  \eqref{eq:S2} we introduce the following energy functionals:
\[
 \E_1(f,g):=\int_0^L \left[ (f\ln f-f+1)+\frac{R}{R_\mu}(g\ln g-g+1) \right]\, dx
\]
and 
\[
\E_2(f,g):=\int_0^L \left[ f^2+R(f+g)^2 \right]\, dx.
\]
It is not difficult to see that both energy functionals $\E_1$ and $\E_2$ dissipate along classical solutions of \eqref{eq:S2}.
While in the classical setting the functional $\E_2$ plays an important role in the study of the stability properties of equilibria \cite{EMM2}, in the weak setting we strongly rely on the weaker energy $\E_1$  which,  nevertheless, provides us with suitable estimates for solutions of a regularised problem and enables us to pass to the limit to  obtain weak solutions. Note also that $\E_1$ appears quite natural in the context of \eqref{eq:S2}, while, when considering \eqref{eq:S1}, one would not expect to have an energy functional of this form.

Our main results read as follows:

%%%%%%%%%%%%%%%%%%%%%%%%%%%%%%%%%%%%%%%%
\begin{thm}\label{T:1} Assume that $R>0,$ $R_\mu>0.$ 
Given $f_0,g_0\in L_2{ ((0,L))}$ with $f_0\geq 0$ and $ g_0\geq 0$  there exists a global { weak} solution $(f,g) $ of \eqref{eq:S2} satisfying
 \begin{itemize}
\item[$(i)$] {$f\ge 0$, $g\ge 0$ in $(0,T)\times (0,L),$}
\item[$(ii)$] {$f,g\in L_\infty((0,T),L_2((0,L)))\cap L_2((0,T), H^1((0,L))),$}
\end{itemize}
for all $T>0$ and 
\begin{align*}
&\int_0^L f(T)\psi\, dx-\int_0^L f_0\psi\, dx= { -} \int_0^T\int_0^L \left((1+R)f\p_xf+R f\p_xg\right)\p_x\psi\, dx\, dt,\\[1ex]
&\int_0^L g(T)\psi\, dx-\int_0^L g_0\psi\, dx={ -} R_\mu\int_0^T\int_0^L \left(g\p_xf+ g\p_xg\right)\p_x\psi\, dx\, dt
\end{align*}
for all $\psi\in W^1_\infty((0,L)).$
Moreover, the weak solutions satisfy 
\begin{align*}
(a)\quad & { \|f(T)\|_1=\|f_0\|_1, \|g(T)\|_1=\|g_0\|_1,} \\[1ex]
(b) \quad &\E_1(f(T),g(T))+\int_0^T\int_0^L \left[ \frac{1}{2} |\p_xf|^2 + \frac{R}{1+2R} |\p_xg|^2 \right]\, dx\, dt\leq { \E_1(f_0,g_0),}\\[1ex]
(c)\quad &\E_2(f(T), g(T))+\int_0^T\int_0^L \left[ f\left((1+R)\p_xf+R\p_xg\right)^2+RR_\mu g(\p_xf+\p_xg)^2 \right]\, dx\, dt\leq \E_2(f_{0},g_{0})
\end{align*}
for almost all  $T\in(0,\infty)$.
\end{thm}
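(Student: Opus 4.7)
The plan is to regularise system \eqref{eq:S2} so that the results of \cite{EMM2} provide classical solutions, derive uniform estimates via the two functionals $\E_1$ and $\E_2$, and then pass to the limit by compactness.

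\textbf{Regularisation and approximate solutions.} For each $\e\in(0,1)$ I would select smooth initial data $f_0^\e,g_0^\e$ satisfying the Neumann compatibility condition, bounded below by $\e$ and converging to $f_0,g_0$ in $L_2((0,L))$ in such a way that $\E_j(f_0^\e,g_0^\e)\to \E_j(f_0,g_0)$ for $j=1,2$. Since these data are smooth and strictly positive, \cite{EMM2} produces a unique classical positive solution $(f_\e,g_\e)$ of \eqref{eq:S2} on some maximal interval $[0,T_\e)$; the uniform bounds derived below ensure that $T_\e=\infty$.

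\textbf{Uniform energy estimates.} Multiplying the first equation in \eqref{eq:S2} by $\ln f_\e$, the second by $(R/R_\mu)\ln g_\e$, integrating by parts, and summing yields
\[
\frac{d}{dt}\E_1(f_\e,g_\e)+\int_0^L\bigl[(1+R)|\p_x f_\e|^2+2R\,\p_x f_\e\,\p_x g_\e+R|\p_x g_\e|^2\bigr]\,dx=0.
\]
The integrand is a positive definite quadratic form (discriminant $-4R<0$) admitting the lower bound $\tfrac{1}{2}|\p_x f_\e|^2+\tfrac{R}{1+2R}|\p_x g_\e|^2$, which is exactly what appears in $(b)$. Testing instead with $2f_\e$ and $2R(f_\e+g_\e)$ respectively, and exploiting the algebraic identity $\p_x f_\e=p_1-R p_2$ with $p_1:=(1+R)\p_x f_\e+R\p_x g_\e$ and $p_2:=\p_x f_\e+\p_x g_\e$ to cause the indefinite cross terms to cancel, yields the dissipation identity underlying $(c)$. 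Together these give uniform bounds on $(f_\e,g_\e)$ in $L_\infty((0,T);L_2)\cap L_2((0,T);H^1)$, and testing the equations against the constant $1$ delivers mass conservation $(a)$ at the approximate level.

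\textbf{Compactness and limit.} The bounds above together with Cauchy-Schwarz place the fluxes $f_\e\p_x f_\e$, $f_\e\p_x g_\e$, $g_\e\p_x f_\e$, $g_\e\p_x g_\e$ in $L_2((0,T);L_1((0,L)))$, so that $\p_t f_\e,\p_t g_\e$ are bounded in $L_2((0,T);(W^1_\infty((0,L)))')$. By the Aubin-Lions lemma, up to a subsequence, $f_\e\to f$ and $g_\e\to g$ strongly in $L_2((0,T)\times(0,L))$ (and a.e.) and weakly in $L_2((0,T);H^1)$. The combination of strong convergence of $f_\e,g_\e$ with weak convergence of $\p_x f_\e,\p_x g_\e$ identifies the limits of the quadratic flux terms against $\p_x\psi$, producing the weak formulation. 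Nonnegativity is preserved under a.e. convergence, mass conservation survives by strong $L_1$ convergence, and the dissipation inequalities $(b)$ and $(c)$ follow by weak lower semicontinuity together with the convergence of the initial energies by design.

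\textbf{Main obstacle.} The most delicate step is the $\E_1$ bound $(b)$: the entropy integrand $s\ln s-s+1$ is nonnegative but singular at $s=0$, so passing to the limit requires Fatou's lemma applied to both the entropy and the dissipation, and one must match this with the explicit convergence $\E_1(f_0^\e,g_0^\e)\to\E_1(f_0,g_0)$ to avoid losing mass in the degenerate regions where $f_\e\to 0$ or $g_\e\to 0$. Ensuring that the approximating sequence retains enough compactness in time despite test functions being allowed only in $W^1_\infty$ is the other technical point that drives the choice of the dual space above.
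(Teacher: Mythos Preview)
There is a genuine gap in your regularisation step. You propose to approximate only the initial data and then invoke \cite{EMM2} for the original strongly coupled system \eqref{eq:S2}. However, \cite{EMM2} provides only local existence for general positive data (the global result there is restricted to data near equilibrium), and the energy bounds you derive---$L_\infty((0,T);L_2)\cap L_2((0,T);H^1)$---do not by themselves ensure $T_\e=\infty$. The obstruction is twofold: these bounds neither prevent $f_\e$ or $g_\e$ from touching zero in finite time (which would destroy uniform parabolicity), nor do they give the pointwise-in-time $H^1$ control required by the continuation criterion of the classical theory. Because \eqref{eq:S2} is strongly coupled and each equation contains top-order derivatives of both unknowns, you cannot simply apply the comparison principle to each equation separately to keep $f_\e,g_\e$ bounded below.

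The paper addresses exactly this issue by regularising the \emph{system}, not just the data: the cross-coupling terms $f\p_x g$ and $g\p_x f$ are replaced by $(f_\e-\e)\p_x G_\e$ and $(g_\e-\e)\p_x F_\e$, where $F_\e,G_\e$ are elliptic mollifications $(1-\e^2\p_x^2)^{-1}$ of $f_\e,g_\e$. The shift by $-\e$ makes the constant $\e$ an exact solution of each equation with homogeneous Neumann data, so the comparison principle (now applicable since the coupling has been demoted to lower order) yields $f_\e,g_\e\ge\e$. The mollification also makes the coupling weak enough that a Gronwall argument produces a uniform-in-time $H^1$ bound (with an $\e$-dependent constant), closing the global existence argument via quasilinear parabolic theory. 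The price is that $\E_2$ is no longer exactly dissipated for the regularised system and must be replaced by a perturbed functional with $O(\e)$ error terms, which are then shown to vanish in the limit. Without some mechanism of this kind, your assertion that $T_\e=\infty$ is unsupported.
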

%%%%%%%%%%%%%%%%%%%%%%%%%%%%%%%%%%%%%%%%

%%%%%%%%%%%%%%%%%%%%%%%%%%%%%%%%%%%%%%%%
{ \begin{rem}\label{rem:1}
If $f_0=0$ for instance, a solution to \eqref{eq:S2} is $(0,g)$ where $g$ solves the classical porous medium equation $\partial_t g = R_\mu \partial_x\left( g \partial_x g \right)$ in $(0,\infty)\times (0,L)$ with homogeneous Neumann boundary conditions and initial condition $g_0$.
\end{rem} }
%%%%%%%%%%%%%%%%%%%%%%%%%%%%%%%%%%%%%%%%

Additionally to the existence result, we show that the weak solutions constructed in Theorem~\ref{T:1} converge at an exponential rate
towards the unique flat equilibrium (which is determined by mass conservation) in the $L_2-$norm:   
 
%%%%%%%%%%%%%%%%%%%%%%%%%%%%%%%%%%%%%%%%
 \begin{thm}[Exponential stability]\label{T:2} Under the assumptions of Theorem \ref{T:1}, there exist positive constants $M$ and $\omega$ such that
 \[
\left\|f(t)-\frac{1}{L}\int_0^Lf_0\, dx\right\|_{2}^2+\left\|g(t)-\frac{1}{L}\int_0^Lg_0\, dx\right\|_{2}^2\leq M e^{-\omega t}\qquad \text{for a.e. $t\geq0.$}
\]
\end{thm}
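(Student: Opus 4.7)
Let $\bar f := L^{-1}\int_0^L f_0\,dx$ and $\bar g := L^{-1}\int_0^L g_0\,dx$, which by Theorem~\ref{T:1}(a) are the time-invariant means of $f(t)$ and $g(t)$. I focus on the case $\bar f, \bar g > 0$; the degenerate case $\bar f = 0$ (resp.\ $\bar g = 0$) forces $f \equiv 0$ (resp.\ $g \equiv 0$) by mass conservation and nonnegativity, reducing \eqref{eq:S2} to a classical porous medium equation whose exponential $L_2$-decay to the mean is standard. Set
\[
V(t) := \|f(t) - \bar f\|_2^2 + \|g(t) - \bar g\|_2^2.
\]
The plan is to establish a contraction $V(t+T_0) \leq V(t)/2$ for some $T_0 > 0$ depending only on the data, and then iterate.

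Three ingredients enter. \emph{(i) Poincar\'e--Wirtinger}: $\|u - \bar u\|_2^2 \leq (L/\pi)^2 \|\partial_x u\|_2^2$ for $u \in H^1((0,L))$ with mean $\bar u$. Applied to $f(\tau), g(\tau)$ and combined with (b) read on a subinterval, it yields a constant $c_0 > 0$ (depending only on $R,L$) with
\[
c_0 \int_{t_1}^{t_2} V(\tau)\,d\tau \leq \E_1(f(t_1), g(t_1)) - \E_1(f(t_2), g(t_2)) \qquad \text{for a.e. }0 \leq t_1 < t_2.
\]
\emph{(ii) Convexity}: the elementary inequality $v\ln v - v + 1 \leq (v-1)^2$ for $v \geq 0$ (the difference has a double zero at $v=1$ and is easily checked to be nonnegative elsewhere), combined with mass conservation, gives
\[
\E_1(f,g) - \E_1(\bar f, \bar g) \leq C_1 V, \qquad C_1 := \max\Bigl(\tfrac{1}{\bar f},\, \tfrac{R}{R_\mu \bar g}\Bigr).
\]
\emph{(iii) Monotonicity}: setting $J(t) := \E_2(f(t), g(t)) - \E_2(\bar f, \bar g)$, mass conservation gives $J = \|f - \bar f\|_2^2 + R\|(f+g) - (\bar f+\bar g)\|_2^2$, hence $c_1 V \leq J \leq c_2 V$ with constants depending only on $R$, and by (c) the map $t \mapsto J(t)$ is non-increasing, so $V(t_2) \leq (c_2/c_1) V(t_1)$ for $t_1 \leq t_2$.

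Combining (i)--(iii): for $t_1 < t_2$,
\[
(t_2 - t_1)\, V(t_2) \leq \tfrac{c_2}{c_1} \int_{t_1}^{t_2} V(\tau)\,d\tau \leq \tfrac{c_2 C_1}{c_1 c_0}\, V(t_1).
\]
Choosing $T_0 := 2 c_2 C_1/(c_1 c_0)$ gives $V(t_1 + T_0) \leq V(t_1)/2$ for a.e.\ $t_1$, hence $V(nT_0) \leq 2^{-n} V(0)$. Monotonicity on $[nT_0, (n+1)T_0]$ then upgrades this to $V(t) \leq M e^{-\omega t}$ with $\omega := (\ln 2)/T_0$ and $M := 2(c_2/c_1) V(0)$. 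The main technical point I expect is the subinterval version of (b) used in (i): Theorem~\ref{T:1}(b) is stated only with $t_1 = 0$, but the inequality on arbitrary $[t_1,t_2]$ is standard for weak solutions obtained as limits of regularized approximations satisfying the energy identity, and this is what I would need to verify carefully within the construction underlying Theorem~\ref{T:1}.
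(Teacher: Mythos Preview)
Your argument is correct (modulo the subinterval versions of (b) and (c), whose derivation via the regularised approximations you correctly anticipate), and it draws on the same ingredients as the paper: the dissipation of $\E_1$ combined with Poincar\'e--Wirtinger to control $\int V$, the convexity bound $v\ln v - v + 1 \le (v-1)^2$ to relate the relative entropy to $V$, and the monotonicity of $\E_2$ (equivalent to $V$) to pass from time-averaged to pointwise decay. The organisation, however, is genuinely different. The paper works at the regularised level and combines the centred versions of $\E_1$ and $\E_2$ into a \emph{single} Lyapunov functional $\mathcal{F}_k$, derives a pointwise differential inequality $d\mathcal{F}_k/dt \le -\omega\,\mathcal{F}_k$, integrates via Gr\"onwall, and only then passes to the limit $\e_k\to 0$. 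Your route is more modular: it treats (b) and (c) of Theorem~\ref{T:1} as separate black boxes and extracts exponential decay by a discrete time-iteration (contraction over a fixed period $T_0$) rather than a differential inequality. The paper's approach sidesteps the a.e.-time technicalities you flag, since the approximations are classical and the energy identities hold for all $t$; your approach has the merit of operating directly from the conclusions of Theorem~\ref{T:1} once the subinterval energy inequalities are secured, and of making transparent that (b) and (c) alone already force exponential decay.
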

%%%%%%%%%%%%%%%%%%%%%%%%%%%%%%%%%%%%%%%%

%%%%%%%%%%%%%%%%%%%%%%%%%%%%%%%%%%%%%%%%
\begin{rem}\label{rem:2} Theorem~\ref{T:2} suggests that degenerate solutions become classical after  evolving over a certain  finite period of time, and therefore would converge in the $H^2-$norm towards the corresponding equilibrium, cf. \cite{EMM2}.   
\end{rem}
%%%%%%%%%%%%%%%%%%%%%%%%%%%%%%%%%%%%%%%%

The outline of the paper is as follows. In Section~\ref{sec:1} we regularise the system \eqref{eq:S2} and prove that { the regularised system has global classical solutions,} the global existence being a consequence of their boundedness away from zero and in $H^1((0,L)).$ { The purpose of the regularisation is twofold: on the one hand, the regularised system is expected to be uniformly parabolic and this is achieved by modifying \eqref{eq:S2} and the initial data such that the comparison principle applied to each equation separately guarantees that $f\ge\e$ and $g\ge \e$ for some $\e>0$. On the other hand, the regularised system is expected to be weakly coupled, a property which is satisfied by a suitable mollification of $\partial_x g$ in the first equation of \eqref{eq:S2} and $\partial_x f$ in the second equation of \eqref{eq:S2}. The energy functional $\mathcal{E}_1$ turns out to provide useful estimates for the regularised system as well.} In Section~\ref{sec:2} we show that the  classical global solutions of the regularised problem converge, in  appropriate norms, towards a weak  solution of \eqref{eq:S2}, and  that they satisfy similar energy inequalities as the classical solutions of \eqref{eq:S2} for both energy functionals $\E_1$ and $\E_2.$ Finally, we give a detailed proof of Theorem~\ref{T:2}. 

\medskip

Throughout the paper, we set $L_p:=L_p((0,L))$ and $W^1_p:=W^1_p((0,L))$ for $p\in [1,\infty]$, and $H^{2\alpha}:=H^{2\alpha}((0,L))$ for $\alpha\in [0,1]$. We also denote positive constants that may vary from line to line and depend only on $L$, $R$, $R_\mu$, and $(f_0,g_0)$ by $C$ or $C_i$, $i\ge 1$. The dependence of such constants upon additional parameters will be indicated explicitly.
 
%%%%%%%%%%%%%%%%%%%%%%%%%%%%%%%%%%%%%%%%
%%%%%%%%%%%%%%%%%%%%%%%%%%%%%%%%%%%%%%%%
\section{The regularised system}\label{sec:1}
%%%%%%%%%%%%%%%%%%%%%%%%%%%%%%%%%%%%%%%%
%%%%%%%%%%%%%%%%%%%%%%%%%%%%%%%%%%%%%%%%

In this section we introduce a regularised system which possesses global solutions provided they are bounded  in $H^1$ and also bounded away from zero. In Section~\ref{sec:2} we show that these solutions converge towards  weak solutions of \eqref{eq:S2}.

\medskip
We fix two nonnegative functions $f_0$ and $g_0$ in $L_2$ (the initial data of  system \eqref{eq:S2}) and first introduce the space
\[
H^2_\B:=\{f\in H^2((0,L))\,:\, \p_x f(0)=\p_xf(L)=0\}.
\] 
We note that for each $\e>0,$ the elliptic operator  $(1-\e^2\p_x^2):H^2_\B\to L_2$ is an isomorphism.
This property is preserved when considering the  restriction 
\[
(1-\e^2\p_x^2):\{f\in C^{2+\alpha}([0,L])\,:\,\p_x f(0)=\p_xf(L)=0\} \to C^{\alpha}([0,L]), \qquad \alpha\in (0,1).
\]
Given $f,g\in L_2,$ we let then 
\begin{equation}
F_\e:=(1-\e^2\p_x^2)^{-1}f, \qquad G_\e:=(1-\e^2\p_x^2)^{-1}g,
\label{spirou}
\end{equation}
and consider the following regularised problem
\begin{subequations}\label{eq:RS}
 \begin{equation}\label{eq:A1}
\left\{
\begin{array}{llll}
\p_t f_\e=&(1+R)\p_x\left(   f_\e\p_x{f_\e}\right)+R\p_x\left((f_\e-\e)\p_x{G_\e}\right),\\[1ex]
\p_tg_\e=&R_\mu\p_x\left(   (g_\e-\e)\p_x{F_\e}\right)+R_\mu \p_x\left(g_\e\p_x{g_\e}\right),
\end{array}
\right.
{ \quad (t,x)\in (0,\infty)\times (0,L),}
\end{equation}
supplemented with homogeneous Neumann boundary conditions  
\begin{equation}\label{A:2}
\p_xf_\e=\p_x g_\e=0\qquad x=0,L,
\end{equation}
and with regularised initial data
\begin{equation}\label{eq:A3}
f_\e(0)=f_{0\e}:=(1-\e^2\p_x^2)^{-1}f_0+\e,\qquad g_\e(0)=g_{0\e}:=(1-\e^2\p_x^2)^{-1}g_0+\e.
\end{equation}
\end{subequations}
 Note that the regularised initial data $(f_{0\e}, g_{0\e})\in H^2_\B\times H^2_\B$ and, invoking the elliptic maximum principle, we have
\begin{equation}\label{eq:id0} 
f_{0\e}\geq \e,\qquad  g_{0\e}\geq\e.
\end{equation}
 Letting $F_{0\e}:=(1-\e^2\p_x^2)^{-1}f_0$ and $G_{0\e}:=(1-\e^2\p_x^2)^{-1}g_0,$
 we obtain by multiplying the relation $F_{0\e}-\e^2\p_x^2 F_{0\e}=f_0$ by $F_{0\e}$ and integrating over $(0,L)$ the following relation
 \[
\|F_{0\e}\|_2^2+\e^2\|\p_x F_{0\e}\|_2^2=\int_0^Lf_0F_{0\e}\, dx { \le \|f_0\|_2\ \|F_{0\e}\|_2,}
\]
which gives a uniform $L_2$-bound for the regularised initial data:
  \begin{equation}\label{eq:id1}
 \|f_{0\e}\|_2\leq \|f_0\|_2+\e { \sqrt{L}}\qquad\text{and}\qquad \|g_{0\e}\|_2\leq \|g_0\|_2+\e { \sqrt{L}}.
  \end{equation}
 
\medskip

Concerning the solvability of problem \eqref{eq:RS}, we use quasilinear parabolic theory, as presented in \cite{Am2}, to prove the following result:

%%%%%%%%%%%%%%%%%%%%%%%%%%%%%%%%%%%%%%%%
\begin{thm}\label{T:3} For each $\e\in(0,1)$ problem~\eqref{eq:RS} possesses a unique global nonnegative solution $X_\e:=(f_\e,g_\e)$ with 
\[
f_\e, g_\e\in C([0,\infty), H^1)\cap C((0,\infty), H^2_\B)\cap  C^1((0,\infty), L_2).
 \]
 Moreover, we have 
 \[
f_\e\geq\e, \qquad g_\e\geq \e\qquad\text{for all }\ (t,x)\in (0,\infty)\times(0,L).
\]
\end{thm}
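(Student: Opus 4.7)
The plan is to apply Amann's quasilinear parabolic theory \cite{Am2} for local well-posedness in $H^2_\B\times H^2_\B$, to propagate the positivity lower bound $f_\e, g_\e\geq\e$ via a comparison argument, and then to rule out finite-time blow-up through uniform-in-$t$ $H^1$ estimates derived from the energy $\E_1$ together with the positivity bound.

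First I would recast \eqref{eq:RS} abstractly as $\p_t X_\e=\A(X_\e)X_\e+\cF(X_\e)$ with $X_\e:=(f_\e,g_\e)$. The key structural observation is that, because the mollifier $(1-\e^2\p_x^2)^{-1}\colon L_2\to H^2_\B$ gains two derivatives, the coupling terms $R\p_x((f_\e-\e)\p_x G_\e)$ and $R_\mu\p_x((g_\e-\e)\p_x F_\e)$ are of lower order relative to $\p_x^2 f_\e$ and $\p_x^2 g_\e$. Consequently the principal part of $\A(X_\e)$ is the diagonal operator $\p_x\,\mathrm{diag}((1+R)f_\e,\,R_\mu g_\e)\,\p_x$, which is normally elliptic on the open subset of $H^2_\B\times H^2_\B$ where both components stay away from zero. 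Since \eqref{eq:id0} places $(f_{0\e},g_{0\e})$ inside this open set with margin $\e$, Amann's theorem yields a unique maximal classical solution $X_\e$ on some interval $[0,T_\e)$ with the regularity claimed in the statement, together with the blow-up criterion that $T_\e<\infty$ forces $\min_{[0,L]}(f_\e(t),g_\e(t))\to 0$ or $\|X_\e(t)\|_{H^1\times H^1}\to\infty$ as $t\to T_\e^-$.

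Next I would establish $f_\e\geq\e$, the argument for $g_\e$ being symmetric. Setting $w:=(\e-f_\e)_+$ and exploiting that the coupling term $R\p_x((f_\e-\e)\p_x G_\e)$ carries the prefactor $f_\e-\e$ that vanishes on $\{w>0\}$, one derives on this set the identity $\p_t w=(1+R)\p_x(f_\e\p_x w)+R\p_x(w\p_x G_\e)$. Testing against $w$ and integrating by parts using the homogeneous Neumann boundary conditions yields a differential inequality of the form $\frac{d}{dt}\|w\|_2^2+(1+R)\int_0^L f_\e(\p_x w)^2\,dx\leq C_\e\|w\|_2^2$, where $C_\e$ comes from Young's inequality absorbing the cross term $R\int w\,\p_x G_\e\,\p_x w\,dx$ against a fraction of the diffusive term, together with the $H^2$-smoothing bound $\|\p_x G_\e\|_\infty\leq C\e^{-2}\|g_\e\|_2$. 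A standard continuity argument in $t$, combined with Gronwall's lemma starting from $w(0)\equiv 0$, yields $w\equiv 0$ on $[0,T_\e)$, hence $f_\e\geq\e$. This excludes the first branch of the blow-up criterion.

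It remains to preclude blow-up of $\|X_\e(t)\|_{H^1}$. Using mass conservation, the dissipation of $\E_1$ along \eqref{eq:RS} (which survives the regularisation after a Cauchy--Schwarz treatment of the mollifier terms), and a second-order energy estimate obtained by testing each equation against $-\p_x^2 f_\e$ (resp.\ $-\p_x^2 g_\e$), with the lower bound $f_\e,g_\e\geq\e$ rendering the leading diffusion coercive, one arrives at $\|X_\e(t)\|_{H^1}\leq C(\e,T)$ on every $[0,T]\cap[0,T_\e)$. The main obstacle is controlling the cross contributions from $\p_x G_\e$ and $\p_x F_\e$ by constants depending only on $\e$ and $T$ and not on the unknown; this is handled by the smoothing estimates for $(1-\e^2\p_x^2)^{-1}$ combined with Gagliardo--Nirenberg interpolation. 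Once achieved, the blow-up criterion is violated for $T_\e<\infty$, forcing $T_\e=\infty$ and completing the proof.
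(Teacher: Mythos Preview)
Your overall architecture matches the paper's: Amann's theory for local existence with a blow-up criterion (Lemma~\ref{L:1}), then the lower bound $f_\e,g_\e\ge\e$, then the $\E_1$ dissipation (Lemma~\ref{L:3}) feeding into a Gronwall argument for the $L_\infty(0,T;H^1)$ bound (Lemma~\ref{L:4}). Two points of execution differ and one of them needs repair.

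\emph{Lower bound.} The paper does not run a Stampacchia argument; it simply observes that the constant function $\e$ solves the first equation of \eqref{eq:A1} with the Neumann conditions (precisely because the coupling term carries the factor $f_\e-\e$), so the scalar comparison principle applied to each equation separately gives $f_\e\ge\e$ and $g_\e\ge\e$ directly. Your truncation route can also be made to work, but not via the absorption you describe: on $\{w>0\}$ one has $f_\e<\e$, so the diffusive coefficient in $(1+R)\int f_\e(\p_xw)^2$ has no useful lower bound and cannot absorb $\delta\int(\p_xw)^2$. The clean fix is to integrate the cross term once more, using $\p_xG_\e=0$ at $x=0,L$, to get $-R\int w\,\p_xw\,\p_xG_\e=\tfrac{R}{2}\int w^2\,\p_x^2G_\e$, which is bounded by $C_\e(t)\|w\|_2^2$ with $C_\e(t)$ depending on $\|g_\e(t)\|_\infty$ (locally bounded since $g_\e\in C([0,T_+(\e)),H^1)$). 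Then Gronwall from $w(0)=0$ closes. Also, your sentence ``the prefactor $f_\e-\e$ that vanishes on $\{w>0\}$'' is misstated: on that set $f_\e-\e=-w\ne0$; what matters is that it equals $-w$.

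\emph{$H^1$ estimate.} You test against $-\p_x^2f_\e$; the paper instead tests against $\p_t q(f_\e)=f_\e\p_tf_\e$ with $q(z)=z^2/2$, obtaining a Gronwall inequality for $\|\p_x q(f_\e)\|_2^2$ and recovering $\|\p_xf_\e\|_2$ from $f_\e\ge\e$ at the end. Both routes work once $f_\e\ge\e$ is known; yours is arguably more direct since the cubic term $(1+R)\int(\p_xf_\e)^2\p_x^2f_\e$ vanishes by the Neumann condition, and the remaining cross terms are absorbed using $\|\p_xG_\e\|_\infty\le\e^{-2}\|g_\e\|_1$ and $\|\p_x^2G_\e\|_2\le\e^{-1}\|\p_xg_\e\|_2$ exactly as in the paper's Lemma~\ref{L:4}.
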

%%%%%%%%%%%%%%%%%%%%%%%%%%%%%%%%%%%%%%%%

In order to prove this global result, we establish the following lemma which gives a criterion for global existence of classical solutions of \eqref{eq:RS}:

%%%%%%%%%%%%%%%%%%%%%%%%%%%%%%%%%%%%%%%%
\begin{lemma}\label{L:1} 
 Given $\e\in(0,1)$, the problem \eqref{eq:RS} possesses a unique maximal strong solution $X_\e=(f_\e,g_\e)$ on a maximal interval $[0,T_+(\e))$ satisfying 
\[
f_\e, g_\e\in C([0,T_+(\e)), H^1)\cap C((0,T_+(\e)), H^2_\B)\cap  C^1((0,T_+(\e)), L_2).
\]
Moreover, if for every $T<T_+(\e)$ there exists $C(\e,T)>0$  such that
\begin{equation}\label{eq:GC}
f_\e\geq\e/2+C(\e,T)^{-1},\quad  g_\e\geq\e/2+C(\e,T)^{-1},\quad\text{and}\quad \max_{t\in[0,T]}\|X_\e(t)\|_{H^1}\le C(\e,T),
\end{equation}
then the solution is globally defined, i.e.  $T_+(\e)=\infty.$
 \end{lemma}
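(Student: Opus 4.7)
The plan is to recast the regularised problem \eqref{eq:RS} as an abstract quasilinear Cauchy problem and apply Amann's local well-posedness theory for quasilinear parabolic systems from \cite{Am2}. Setting $X_\e:=(f_\e,g_\e)$ and $S_\e:=(1-\e^2\p_x^2)^{-1}:L_2\to H^2_\B$, the system \eqref{eq:A1}-\eqref{A:2} takes the form
\begin{equation*}
\p_t X_\e + \A(X_\e) X_\e = 0,\qquad X_\e(0)=(f_{0\e},g_{0\e}),
\end{equation*}
where $\A(u)$ is the $2\times 2$ matrix differential operator with \emph{diagonal} principal part
\begin{equation*}
\A_{\mathrm{pr}}(f,g) = -\mathrm{diag}\bigl((1+R)\p_x(f\,\p_x\cdot),\, R_\mu \p_x(g\,\p_x\cdot)\bigr),
\end{equation*}
whereas the cross couplings $-R\p_x\bigl((f-\e)\p_x S_\e(\cdot)\bigr)$ and $-R_\mu\p_x\bigl((g-\e)\p_x S_\e(\cdot)\bigr)$ are of lower order, the crucial point being that $S_\e$ smooths by two derivatives and maps $L_2$ boundedly into $H^2_\B$.

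As state space I would take
\begin{equation*}
\U_\e := \{u=(f,g)\in H^1\times H^1 : f > \e/2\text{ and }g > \e/2\text{ on }[0,L]\},
\end{equation*}
which is open in $H^1\times H^1$ thanks to the embedding $H^1\hookrightarrow C([0,L])$. On $\U_\e$ the principal symbol $\mathrm{diag}((1+R)f, R_\mu g)$ is uniformly positive, so $\A(u)$ is normally elliptic with domain $H^2_\B\times H^2_\B$ (which incorporates the Neumann conditions \eqref{A:2}), and the map $u\mapsto \A(u)$ from $\U_\e$ to $\mathcal{L}(H^2_\B\times H^2_\B, L_2\times L_2)$ is locally Lipschitz. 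This last property follows from the algebra property of $H^1$ in one space dimension, the embedding $H^1\hookrightarrow L_\infty$, and the boundedness of $S_\e:L_2\to H^2_\B$. Since $(f_{0\e},g_{0\e})\in H^2_\B\times H^2_\B$ lies in $\U_\e$ by \eqref{eq:id0}, Amann's theorem delivers a unique maximal strong solution $X_\e$ on $[0,T_+(\e))$ with the regularity stated in the lemma.

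Amann's theory also supplies a continuation criterion, asserting that $T_+(\e)=\infty$ as soon as the orbit $\{X_\e(t):t\in[0,T]\}$ stays, for every $T<T_+(\e)$, inside a subset of $\U_\e$ of the form $\{\|u\|_{H^1}\leq C,\ f\geq \e/2+C^{-1},\ g\geq \e/2+C^{-1}\}$ for some $C>0$: such a subset is bounded in $H^1\times H^1$ and has strictly positive distance to $\p\U_\e$, which rules out both blow-up in the intermediate norm and degeneracy of the principal symbol. This is exactly the content of hypothesis \eqref{eq:GC}, so the lemma will follow.

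The main technical step is to verify that the cross-coupling terms are genuinely subordinate to the diagonal principal part; this rests on the two-derivative gain of $S_\e$ together with the one-dimensional Sobolev embeddings, and although the resulting bounds depend on $\e$, this is harmless since $\e$ is fixed throughout the lemma.
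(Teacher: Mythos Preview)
Your approach is correct and shares the paper's key observation---that the regularising operator $S_\e=(1-\e^2\p_x^2)^{-1}$ gains two full derivatives, so the cross-couplings are of strictly lower order than the diagonal principal part---but the routes diverge after that. The paper does \emph{not} absorb the coupling into $\A(u)$; instead it writes the system as $\p_t X+\A(X)X=F(X)$ with a purely diagonal second-order operator $\A(X)$ and a semilinear source $F(X)=\bigl(R\p_x((f-\e)\p_x G_\e),\,R_\mu\p_x((g-\e)\p_x F_\e)\bigr)$. It then first applies Amann's \emph{weak} solution theory (Theorem~13.1 in \cite{Am2}) in a carefully chosen scale $H^{-\xi}\!\subset H^{1-\xi}\!\subset H^{1+\xi}\!\subset H^{3/2-3\xi}$, obtaining a maximal weak $H^{3/2-\xi}_\B$-solution together with the continuation criterion in $H^1$. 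The strong regularity $C((0,T_+),H^2_\B)\cap C^1((0,T_+),L_2)$ is then recovered by a two-step bootstrap: first interpolate to get H\"older time regularity with values in $C^\mu$, apply Theorem~11.3 of \cite{Am2} to the linearised problem to climb to $H^{3/2+\xi}$, and finally invoke Theorem~10.1 to reach $H^2_\B$.

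Your route---putting the coupling inside $\A(u)$ as a bounded perturbation of the sectorial diagonal part and invoking the strong-solution theorem directly---is more economical and avoids the bootstrap entirely; it works precisely because the off-diagonal blocks, viewed as operators in the second argument, are bounded from $L_2$ to $L_2$ once $u\in H^1$. The paper's detour through weak solutions buys an explicit identification of the continuation criterion at the level of $H^{1-\xi}$ (hence certainly at $H^1$) without having to argue separately that bounded $L_2$-perturbations preserve maximal regularity, at the cost of the additional regularity bootstrap. Both end up with the same global-existence criterion \eqref{eq:GC}.
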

%%%%%%%%%%%%%%%%%%%%%%%%%%%%%%%%%%%%%%%%

\begin{proof} Let $\e\in(0,1)$ be fixed. 
To lighten our notation we omit the subscript $\e$ in the remainder of this proof. 
Note first that problem \eqref{eq:RS}  has a  quasilinear structure, in the sense that \eqref{eq:RS} is equivalent to the system of equations:
 \begin{equation}\label{eq:RS1}
\left\{
\begin{array}{rllll}
\p_tX+\A(X)X&=&F(X)&\text{in}& (0,\infty)\times (0,L),\\[1ex]
\B X&=&0&\text{on}& (0,\infty)\times \{0,L\},\\[1ex]
X(0)&=&X_{0}&\text{on}& (0,L),
\end{array}
\right.
\end{equation}
where the new variable is $X:=(f,g)$ with $X_0=(f_0,g_0),$ and the operators $\B$ and $F$ are respectively given by
\[
\B X:=\p_x X, \qquad 
F(X):=
\p_x\left(
\begin{array}{ll}
R(f-\e)\p_x{G}\\[1ex]
R_\mu  (g-\e)\p_x{F}
\end{array}
\right).
\]
Letting 
\[
a(X):=
\left(
\begin{array}{ll}
{ (1+R)} f&0\\[1ex]
0&R_\mu  g
\end{array}
\right),
\]
the operator $\A$ is defined by the relation $\A(X)Y:=-\p_x(a(X)Y).$  We shall prove first that  \eqref{eq:RS1} has a weak solution defined on a maximal time interval, for which we have a weak criterion for global existence. We then improve in successive steps the regularity of the solution to show that it is actually a classical solution, so that this criterion guarantees also global existence of classical solutions.

Given $\alpha\in[0,1]$, the complex interpolation space $H^{2\alpha}_\B:=[L_2, H^2_\B]_\alpha$ is known to satisfy, cf. \cite{Am2},
\[
H^{2\alpha}_\B=
\left\{
\begin{array}{lll}
H^{2\alpha}&,& \alpha\leq 3/4,\\[1ex]
\{f\in H^{2\alpha}\, :\,\text{ $\partial_x f=0$ for $x=0,L$}\}&,&\alpha>3/4.
\end{array}
\right.
\]
Furthermore, for each $\beta\in(1/2, 2]$
we define the set 
\[
\V^\beta_\B:=\{f\in H^\beta_\B\,:\, \e/2<f\},
\]
which is open in $H^2_\B.$
Choose now $\gamma:=1/2-2\xi>0,$ where $\xi\in(0,1/18).$
We infer   from  \eqref{eq:A3}, that $X_0\in\V^{1-\xi}_\B\times\V^{1-\xi}_\B$. 
In order to obtain existence of a unique  weak solution of \eqref{eq:RS1} we verify the assumptions of \cite[Theorem 13.1]{Am2}. 
With the notation of \cite[Theorem 13.1]{Am2} we define 
\[
(\sigma,s,r,\tau):=(3/2-3\xi, 1+\xi,1-\xi,-\xi), \qquad 2\wh\alpha:=3/2-3\xi.
\]
Since $1-\xi-1/2>\gamma,$ we conclude that $\V^{1-\xi}_\B\subset C^\gamma([0,L]),$ meaning that the elements of $a(X)$ belong to  $C^\gamma([0,L])$ for $X\in \mathcal{V}^{1-\xi}_\B\times \mathcal{V}^{1-\xi}_\B$. Since $a(X)$ is  positive definite, we conclude in virtue of $\gamma>2\wh\alpha-1$ that 
\[
(\A,\B)\in C^{1-}(\mathcal{V}^{1-\xi}_\B\times \mathcal{V}^{1-\xi}_\B, \E^{\wh\alpha}(0,L)),
\]
the notation $\E^{\wh\alpha}(0,L)$ being defined in \cite[Sections~4 \&~8]{Am2}.
 Moreover, since $(1-\e^2\p_x^2)^{-1}\in\kL( L_2,H^2_\B)$ we also have
\[
F\in C^{1-}(\mathcal{V}^{1-\xi}\times \mathcal{V}^{1-\xi}, H^{-\xi}),
\]
whereby, in the notation of \cite{Am2}, $H^{-\xi}=H^{-\xi}_\B$ because $|\xi|<1/2,$   cf. \cite[eq. (7.5)]{Am2}.
Thus, we find all the assumptions of  \cite[Theorem 13.1]{Am2} fulfilled, and conclude that,
 for each $X_0\in \V^2_\B$, there exists a unique maximal weak  $H^{3/2-\xi}_\B-$solution $X=(f,g)$ of \eqref{eq:RS1}, that is 
\[
f,g \in C([0,T_+), \V^{1-\xi}_\B)\cap C((0,T_{+}), H^{3/2-\xi}_\B)\cap C^1((0,T_{ +}), H^{-1/2-3\xi}_\B),
\]
 and the first equation of \eqref{eq:RS1} is satisfied for all $t\in(0,T_+)$ when testing with functions belonging to $H^{1/2+3\xi}_\B.$
 Moreover, if $X\big|[0,T]$ is bounded in $H^1_\B\times H^1_\B$ and bounded away from  $\p\V^{1-\xi}_\B$ for all $T>0,$ { then} $T_+=\infty,$ which yields the desired criterion \eqref{eq:GC}. 
 
 We show now that this weak solution  has even more regularity, to conclude in the end that the existence time of the strong solution of \eqref{eq:RS1}
 coincides with that of the weak solution (of course they are identical on each interval where they are defined).
 Indeed, given $\delta>0,$
 it holds that $X\in C([\delta,T_+), H^{3/2-3\xi}_\B)\cap C^{1}([\delta,T_+), H^{-1/2-3\xi}_\B).$
 Hence, if $0<2\rho<2,$ we conclude from \cite[Theorem 7.2]{Am2}  and \cite[Proposition 1.1.5]{L} that $X$ is actually H\"older continuous 
 \[
X\in C^\rho([\delta,T_+), H^{3/2-3\xi-2\rho}_\B).
\]
 Choosing $\rho:=\xi$ and $\mu:=1-6\xi>0,$ we have that $H^{3/2-3\xi-2\rho}_\B\hookrightarrow C^\mu([0,L]),$ so that the elements of
 the matrix $a(X(t))$ belong to $C^{\mu}([0,L])$ for all $t\in[\delta,T_+).$
 Defining $2\wh \mu:=2-8\xi>0$, we observe that $\mu>2\wh \mu-1 $ and 
\[
(\A(X),\B)\in C^\rho([\delta, T_+),\mathcal{E}^{\wh \mu}((0,L))).
\]  
 Finally, with $2\nu:=3/2+\xi$,  our choice for $\xi$ implies
\[
(X(\delta), F(X))\in H^{2\nu-2}_\B\times C^\rho([\delta,T_+), H^{-\xi}_\B)\subset H^{2\nu-2}_\B\times C^\rho([\delta,T_+), H^{2\nu-2}_\B),
\]
and the assertions of \cite[Theorem 11.3]{Am2} are all fulfilled. 
Whence, the linear problem
 \begin{equation}\label{eq:Gse}
\left\{
\begin{array}{rllll}
\p_tY+\A(X)Y&=&F(X)&\text{in}& (\delta,{ T_+})\times (0,L),\\[1ex]
\B Y&=&0&\text{on}& (\delta,{ T_+})\times \{0,L\},\\[1ex]
Y(\delta)&=&X(\delta)&\text{on}& (0,L),
\end{array}
\right.
\end{equation}
possesses a unique strong $H^{2\nu}-$solution $Y$, that is
\[
Y \in C([\delta,T_+), H^{2\nu-2}_\B\times H^{2\nu-2}_\B)\cap C((\delta,T_+), H^{2\nu}_\B\times H^{2\nu}_\B)\cap C^1((\delta,T_+), H^{2\nu-2}_\B\times H^{2\nu-2}_\B).
\]
In view of \cite[Remark~11.1]{Am2} we conclude that both $X$ and $Y$ are weak $H^{3/2-3\xi}_\B-$solutions of \eqref{eq:Gse}, whence we infer from \cite[ Theorem~11.2]{Am2}    that $X=Y,$ and so
 \[
f, g\in C([\delta,T_+), H^{-1/2+\xi}_\B)\cap C((\delta,T_+), H^{3/2+\xi}_\B)\cap C^1((\delta,T_+), H^{-1/2+\xi}_\B).
\]
Interpolating as we did  previously and taking into account that $\delta$ was arbitrarily chosen, we have $f,g\in C^\theta([\delta, T_+), C^{1+\theta}([0,L]))$ if we set $4\theta\leq\xi.$ 
Hence, we find that
\[
(X(\delta), (A(X),F(X)))\in L_2\times C^\theta([\delta,T_+), \mathcal{H}(H^2_\B\times H^2_\B, L_2\times L_2)\times (L_2\times L_2)),
\] 
where $\mathcal{H}(H^2_\B\times H^2_\B, L_2\times L_2)$ denotes the set of linear operators in $L_2\times L_2$ with domain $H^2_\B\times H^2_\B$ which are negative infinitesimal generators of  analytic semigroups on $L_2\times  L_2$, and, in virtue of \cite[Theorem 10.1]{Am2}, 
 \[
X\in    C((\delta,T_+), H^{2}_\B\times H^{2}_\B)\cap C^1((\delta,T_+), L_2\times L_2)
\]
 for all $\delta\in(0,T_+).$
 Hence, the strong solution of \eqref{eq:RS1}, which is obtained by applying \cite[Theorem 12.1]{Am2} to that particular system,
 exists on $[0,T_+)$ and the proof is complete.
 \end{proof}
 
 The remainder of this section is devoted to prove that $T_+(\e)=\infty$ for the strong solution $(f_\e,g_\e)$ of \eqref{eq:RS} constructed in Lemma~\ref{L:1}. In view of Lemma \ref{L:1}, it suffices to prove that $(f_\e,g_\e)$ are a priori bounded in $H^1$ and away from zero. Concerning the latter, we note that we may apply the parabolic maximum principle to each equation of the regularised system \eqref{eq:A1} separately. 
Indeed, owing to the boundary conditions \eqref{A:2}, the constant function $(t,x)\mapsto \e$ solves the first equation of \eqref{eq:A1} and \eqref{A:2} while we have $f_{0\e}\ge \e$ by \eqref{eq:A3}. 
Consequently, $f_\e\ge \e$ in $[0,T_+(\e))\times [0,L]$ and, using a similar argument for $g_\e$, we conclude that
\begin{equation}\label{eq:GE1}
f_\e\geq\e, \qquad g_\e\geq \e\qquad \text{for all $(t,x)\in[0,T_+(\e))\times (0,L)$.} 
\end{equation}
 Next, owing to \eqref{A:2} and the nonnegativity of $f_\e$ and $g_\e$, it readily follows from \eqref{eq:A1} that the $L_1-$norm of $f_\e$ and $g_\e$ is conserved in time, that is,
\begin{equation}\label{eq:GE2}
\|f_\e(t)\|_1= \|f_{0\e}\|_1=\|f_0\|_1+\e L, \quad \|g_\e(t)\|_1=\|g_{0\e}\|_1=\|g_0\|_1+\e L 
\end{equation}
for all $t\in[0,T_+(\e))$. 
The next step is to improve the previous $L^1-$bound to an $H^1-$bound as required by Lemma~\ref{L:1}. 
To this end, we shall use the energy $\mathcal{E}_1$ for the regularised problem, see \eqref{eq:GE3} below. As a preliminary step, we collect some properties of the functions $(F_\e,G_\e)$ defined in \eqref{spirou} in the next lemma.

%%%%%%%%%%%%%%%%%%%%%%%%%%%%%%%%%%%%%%%%
\begin{lemma}\label{L:2.5} For all $t\in(0,T_+(\e))$
\begin{eqnarray}
& & \|F_\e(t)\|_2 \le \|f_\e(t)\|_2, \quad \|G_\e(t)\|_2 \le \|g_\e(t)\|_2 , \label{eq:es1s} \\
& & \|\p_x F_\e(t)\|_2 \leq \|\p_xf_\e(t)\|_2, \quad \|\p_x G_\e(t)\|_2 \leq \|\p_xg_\e(t)\|_2 , \label{eq:es1}\\
& & \e\ \|\p_x^2 F_\e(t)\|_2 \leq \|\p_xf_\e(t)\|_2, \quad \e\ \|\p_x^2 G_\e(t)\|_2 \leq \|\p_xg_\e(t)\|_2 . \label{eq:es1s1}
\end{eqnarray} 
\end{lemma}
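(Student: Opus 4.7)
The plan is to exploit the defining elliptic equation $F_\e - \e^2 \partial_x^2 F_\e = f_\e$ on $(0,L)$ with the homogeneous Neumann boundary conditions built into the domain of $(1-\e^2\partial_x^2)^{-1}$, namely $\partial_x F_\e(0) = \partial_x F_\e(L) = 0$. For $t > 0$ we have $f_\e(t) \in H^2_\B$ by Theorem~\ref{T:3} and hence $F_\e(t) \in H^4\cap H^2_\B$ by elliptic regularity, which justifies all the integrations by parts below classically (the same argument works for $G_\e$ by the obvious substitution).

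For the $L_2$ bound \eqref{eq:es1s}, I would multiply $F_\e - \e^2 \partial_x^2 F_\e = f_\e$ by $F_\e$ and integrate over $(0,L)$. One integration by parts eliminates the boundary contribution thanks to $\partial_x F_\e \in H^2_\B$ and yields
\begin{equation*}
\|F_\e\|_2^2 + \e^2 \|\partial_x F_\e\|_2^2 = \int_0^L f_\e F_\e \, dx \leq \|f_\e\|_2\, \|F_\e\|_2 .
\end{equation*}
Dropping the nonnegative second term on the left and dividing by $\|F_\e\|_2$ gives $\|F_\e\|_2 \leq \|f_\e\|_2$ (the case $\|F_\e\|_2 = 0$ being trivial). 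This is essentially the same computation already carried out in the paper to derive \eqref{eq:id1}.

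For \eqref{eq:es1} and \eqref{eq:es1s1}, I would instead test the equation against $-\partial_x^2 F_\e$. Two integrations by parts on the left, both of whose boundary terms vanish because $\partial_x F_\e$ does at $x=0,L$, together with one integration by parts on the right (whose boundary term again vanishes, since $\partial_x F_\e = 0$ there), produce
\begin{equation*}
\|\partial_x F_\e\|_2^2 + \e^2 \|\partial_x^2 F_\e\|_2^2 = \int_0^L \partial_x f_\e\, \partial_x F_\e \, dx \leq \|\partial_x f_\e\|_2\, \|\partial_x F_\e\|_2 .
\end{equation*}
Discarding the term $\e^2\|\partial_x^2 F_\e\|_2^2$ and dividing through immediately yields $\|\partial_x F_\e\|_2 \leq \|\partial_x f_\e\|_2$, which is \eqref{eq:es1}. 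Feeding this bound back into the previous identity gives $\e^2 \|\partial_x^2 F_\e\|_2^2 \leq \|\partial_x f_\e\|_2^2$, i.e.\ $\e\,\|\partial_x^2 F_\e\|_2 \leq \|\partial_x f_\e\|_2$, which is \eqref{eq:es1s1}.

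I do not foresee any real obstacle: the argument is the standard energy/duality manipulation of the Helmholtz-type resolvent $(1-\e^2\partial_x^2)^{-1}$, and the only mild point to check is that the boundary terms drop, which they do because $\partial_x F_\e$ (and $\partial_x G_\e$) vanish at the endpoints. If one wanted to avoid invoking $H^4$ regularity of $F_\e$, the same identities can be obtained equivalently from the weak formulation $\int F_\e \varphi + \e^2 \int \partial_x F_\e\, \partial_x \varphi = \int f_\e \varphi$ with $\varphi = F_\e$ and $\varphi = -\partial_x^2 F_\e$ respectively, so the argument requires only $f_\e(t)\in H^1$.
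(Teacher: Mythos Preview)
Your argument is correct and essentially identical to the paper's: the paper proves \eqref{eq:es1s} by referring back to the computation leading to \eqref{eq:id1} (multiply by $F_\e$ and integrate), and proves \eqref{eq:es1}--\eqref{eq:es1s1} by multiplying $F_\e-\e^2\partial_x^2 F_\e=f_\e$ by $-\partial_x^2 F_\e$, integrating, and applying Cauchy--Schwarz, exactly as you do. One cosmetic remark: at this point in the paper only Lemma~\ref{L:1} is available (Theorem~\ref{T:3} is proved afterwards using Lemma~\ref{L:2.5}), so the regularity $f_\e(t)\in H^2_\B$ for $t\in(0,T_+(\e))$ should be attributed to Lemma~\ref{L:1} rather than Theorem~\ref{T:3}; the argument is otherwise unchanged.
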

%%%%%%%%%%%%%%%%%%%%%%%%%%%%%%%%%%%%%%%%

\begin{proof}
The proof of \eqref{eq:es1s} is similar to that of \eqref{eq:id1}. We next multiply the equation $F_\e - \e^2 \p_x^2 F_\e = f_\e$ by $- \p_x^2 F_\e$, integrate over $(0,L)$ and use the Cauchy-Schwarz inequality to estimate the right-hand side and obtain \eqref{eq:es1} and \eqref{eq:es1s1}. 
\end{proof}

%%%%%%%%%%%%%%%%%%%%%%%%%%%%%%%%%%%%%%%%
\begin{lemma}\label{L:3} Given $T\in(0,T_+(\e)),$ we have that
\begin{equation}\label{eq:GE3}
\E_1(f_\e(T), g_\e(T))+\int_0^T\int_0^L \left( \frac{1}{2} |\p_xf_\e|^2+ \frac{R}{1+2R} |\p_x g_\e|^2 \right) \, dx\, dt\leq \E_1(f_\e(0), g_\e(0)).
\end{equation}
\end{lemma}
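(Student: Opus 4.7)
The strategy is to compute $\frac{d}{dt}\mathcal{E}_1(f_\e,g_\e)$, integrate by parts using the no-flux condition \eqref{A:2}, and then absorb the cross terms arising from the mollified couplings into the diagonal dissipation using Young's inequality with a sharp choice of constant.

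First, invoking the regularity $f_\e,g_\e\in C([0,T],H^1)\cap C((0,T),H^2_\B)\cap C^1((0,T),L_2)$ from Lemma~\ref{L:1} together with the strict lower bound $f_\e,g_\e\geq\e>0$ from \eqref{eq:GE1}, the map $t\mapsto\mathcal{E}_1(f_\e(t),g_\e(t))$ is absolutely continuous and
\[
\frac{d}{dt}\mathcal{E}_1(f_\e,g_\e)=\int_0^L\ln f_\e\,\partial_tf_\e\,dx+\frac{R}{R_\mu}\int_0^L\ln g_\e\,\partial_tg_\e\,dx.
\]
Substituting \eqref{eq:A1}, integrating by parts (the boundary contributions vanish thanks to \eqref{A:2}), and using $\partial_x\ln f_\e=\partial_xf_\e/f_\e$ and similarly for $g_\e$, I obtain
\[
\frac{d}{dt}\mathcal{E}_1(f_\e,g_\e)=-(1+R)\|\partial_xf_\e\|_2^2-R\|\partial_xg_\e\|_2^2-R\,I_1-R\,I_2,
\]
with cross terms
\[
I_1:=\int_0^L\Bigl(1-\frac{\e}{f_\e}\Bigr)\partial_xf_\e\,\partial_xG_\e\,dx,\qquad I_2:=\int_0^L\Bigl(1-\frac{\e}{g_\e}\Bigr)\partial_xg_\e\,\partial_xF_\e\,dx.
\]
Since $0\leq 1-\e/f_\e\leq 1$ and $0\leq 1-\e/g_\e\leq 1$ by \eqref{eq:GE1}, the Cauchy--Schwarz inequality combined with \eqref{eq:es1} of Lemma~\ref{L:2.5} gives
\[
|I_1|+|I_2|\leq\|\partial_xf_\e\|_2\|\partial_xG_\e\|_2+\|\partial_xg_\e\|_2\|\partial_xF_\e\|_2\leq 2\|\partial_xf_\e\|_2\|\partial_xg_\e\|_2.
\]
Now I would apply Young's inequality with a parameter $\alpha>0$ calibrated so that the two dissipation coefficients land exactly on $1/2$ and $R/(1+2R)$. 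The choice $\alpha=(1+2R)/(2R)$ yields
\[
R(|I_1|+|I_2|)\leq R\alpha\|\partial_xf_\e\|_2^2+\frac{R}{\alpha}\|\partial_xg_\e\|_2^2=\Bigl(R+\tfrac{1}{2}\Bigr)\|\partial_xf_\e\|_2^2+\Bigl(R-\tfrac{R}{1+2R}\Bigr)\|\partial_xg_\e\|_2^2,
\]
so that
\[
\frac{d}{dt}\mathcal{E}_1(f_\e,g_\e)+\frac{1}{2}\|\partial_xf_\e\|_2^2+\frac{R}{1+2R}\|\partial_xg_\e\|_2^2\leq 0.
\]
Integrating this differential inequality over $(0,T)$ produces \eqref{eq:GE3}.

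The one genuine subtlety, as opposed to routine integration by parts, is the choice of the Young constant: the cross terms feature the full $\|\partial_xg_\e\|_2$ (through $\|\partial_xG_\e\|_2\leq\|\partial_xg_\e\|_2$) rather than an $\e$-smallness, so they cannot be treated perturbatively and must be balanced exactly against both diagonal dissipations at once. The specific weights $1/2$ and $R/(1+2R)$ in the statement are precisely the sharp values forced by the identity $R\alpha=R+1/2$ and $R/\alpha=R-R/(1+2R)$ for a common $\alpha$.
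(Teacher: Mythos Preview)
Your proof is correct and follows essentially the same route as the paper: differentiate $\mathcal{E}_1$, integrate by parts using \eqref{A:2}, bound the cross terms via $0\le (f_\e-\e)/f_\e\le 1$ together with $\|\partial_xG_\e\|_2\le\|\partial_xg_\e\|_2$ from Lemma~\ref{L:2.5}, and then balance the resulting $2R\|\partial_xf_\e\|_2\|\partial_xg_\e\|_2$ against the diagonal dissipation. The only cosmetic difference is that the paper organises the last step by completing the square on the quadratic form $\tfrac{1+2R}{2}a^2-2Rab+Rb^2$, whereas you apply Young's inequality with the explicit parameter $\alpha=(1+2R)/(2R)$; the two computations are algebraically equivalent and yield the identical constants $1/2$ and $R/(1+2R)$.
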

%%%%%%%%%%%%%%%%%%%%%%%%%%%%%%%%%%%%%%%%

\begin{proof} Using \eqref{eq:es1} and H\"older's inequality, we get
\begin{align*}
\frac{d}{dt}\E_1(f_\e, g_\e)&=\int_0^L\p_t \left( f_\e\log(f_\e) \right) +\frac{R}{R_\mu}\p_t \left( g_\e\log(g_\e) \right)\, dx &\\[1ex] =&-\int_0^L\left((1+R)|\p_xf_\e|^2+R\frac{f_\e-\e}{f_\e}\p_xf_\e\p_xG_\e+R\frac{g_\e-\e}{g_\e}\p_xg_\e\p_xF_\e+R|\p_xg_\e|^2\right)\, dx\\[1ex]
\leq& { - (1+R)\|\p_xf_\e\|_2^2 + R \|\p_x f_\e\|_2\|\p_x G_\e\|_2 + R \|\p_x g_\e\|_2 \|\p_x F_\e\|_2 - R \|\p_x g_\e\|_2^2}\\[1ex]
\leq& -\frac{1}{2}\|\p_xf_\e\|_2^2 - \left( \frac{1+2R}{2} \|\p_xf_\e\|_2^2-2R\|\p_xf_\e\|_2\|\p_xg_\e\|_2+R\|\p_xg_\e\|_2^2\right)\\[1ex]
\leq&-\frac{1}{2}\|\p_xf_\e\|_2^2-\frac{R}{1+2R}\|\p_xg_\e\|_2^2.
\end{align*}
Integrating with respect to time, we obtain the desired assertion.
\end{proof} 
 
 Since $z\ln z-z+1\geq0$ for all $z\in[0,\infty),$ relation \eqref{eq:GE3} gives a uniform estimate in $(\e,t)\in (0,1)\times (0,T_+(\e))$ of  $(\p_x f_\e,\p_x g_\e)$ in $L_2((0,T), L_2\times L_2)$  in dependence only of the initial 
 condition $(f_0,g_0)$. 
Indeed, on the one hand, since $\ln z \le z-1$ for all $z\in [0,\infty)$, we have
\begin{equation}\label{eq:eq}
 z\ln z-z+1\leq z(z-1) - (z-1) = (z-1)^2\qquad \text{for all $z\geq0$},
\end{equation}
so that
\begin{equation}\label{eq:M}
\E_1(f_{0\e},g_{0\e})\leq \int_{0}^L \left( (f_{0\e}-1)^2+\frac{R}{R_\mu}(g_{0\e}-1)^2 \right)\, dx\leq {C_1}
\end{equation}
for all $\e\in(0,1)$ by \eqref{eq:id1}. 
On the other hand, owing to the Poincar\'e-Wirtinger inequality and \eqref{eq:GE2}, we have
$$
\|f_\e(t)\|_2 \le \left\| f_\e(t) - \frac{1}{L} \|f_\e(t)\|_1 \right\|_2 + \frac{\|f_\e(t)\|_1}{\sqrt{L}} \le C\ \left( \|\p_x f_\e(t)\|_2 + 1 \right).
$$
A similar bound being available for $g_\e$, we infer from \eqref{eq:GE3}, \eqref{eq:M}, and the nonnegativity of $\mathcal{E}_1$ that, for $T\in (0,T_+(\e))$, 
\begin{eqnarray}
\int_0^T \left( \|f_\e(t)\|_{H^1}^2 + \|g_\e(t)\|_{H^1}^2 \right)\, dt & \le & C\ \int_0^T \left( 1 + \|\p_x f_\e(t)\|_2^2 +  \|\p_x g_\e(t)\|_2^2 \right)\, dt \nonumber \\
& \le & C\ \left[ T+\mathcal{E}_1(f_{0\e},g_{0\e}) \right] \le C_2(T). \label{spip}
\end{eqnarray} 
We next use this estimate to prove  that  the solution $(f_\e,g_\e)$ of \eqref{eq:RS} is bounded  in $L_\infty((0,T),H^1\times H^1) $  for all $T<T_+(\e)$. 
 While the estimates were independent of $\e$ up to now, the next ones have a strong dependence upon $\e$ which explains the need of a regularisation of the original system.

%%%%%%%%%%%%%%%%%%%%%%%%%%%%%%%%%%%%%%%%
\begin{lemma}\label{L:4} Given $(\e,T)\in (0,1)\times (0,T_+(\e)),$ there exists a constant $C(\e,T)>0$ such that the solution $(f_\e,g_\e)$ of { \eqref{eq:RS}} fulfills
\begin{equation}\label{eq:GEx}
\|f_\e(t)\|_{H^1}+\|g_\e(t)\|_{H^1}\leq C(\e,T)\qquad\text{for all $t\in [0,T].$}
\end{equation}
\end{lemma}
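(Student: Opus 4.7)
The plan is to derive the desired $H^1$ bound by testing the first equation of \eqref{eq:A1} with $-\p_x^2 f_\e$ (and the second equation with $-\p_x^2 g_\e$), exploiting the regularisation lower bound $f_\e\ge\e$ together with the time-integrability already provided by \eqref{spip}. Using the boundary condition \eqref{A:2}, the cubic term $(1+R)\int (\p_x f_\e)^2 \p_x^2 f_\e\, dx = \frac{1+R}{3}\int_0^L \p_x((\p_x f_\e)^3)\, dx$ vanishes, so after integration by parts in the $\p_t f_\e$ term one obtains
\begin{equation*}
\frac{1}{2}\frac{d}{dt}\|\p_x f_\e\|_2^2 + (1+R)\int_0^L f_\e (\p_x^2 f_\e)^2\, dx = R\int_0^L (-\p_x^2 f_\e)\,\p_x\!\left[(f_\e-\e)\p_x G_\e\right] dx,
\end{equation*}
together with the symmetric identity for $g_\e$.

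To handle the right-hand side I would expand $\p_x[(f_\e-\e)\p_x G_\e] = \p_x f_\e\,\p_x G_\e + (f_\e-\e)\p_x^2 G_\e$. For the first piece an additional integration by parts (whose boundary term vanishes since $\p_x f_\e = \p_x G_\e = 0$ at $x=0,L$) converts it to $\frac{1}{2}\int (\p_x f_\e)^2 \p_x^2 G_\e\, dx$, which I would bound via the key estimate $\|\p_x^2 G_\e\|_2\le\e^{-1}\|\p_x g_\e\|_2$ of Lemma~\ref{L:2.5} combined with the one-dimensional inequality $\|\p_x f_\e\|_\infty\le\sqrt{L}\,\|\p_x^2 f_\e\|_2$ that follows from $\p_x f_\e=0$ at the endpoints. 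For the second piece I would use $\|f_\e\|_\infty^2\le C(1+\|\p_x f_\e\|_2^2)$, a consequence of the Sobolev embedding $H^1\hookrightarrow L_\infty$ together with mass conservation \eqref{eq:GE2}, and again invoke $\|\p_x^2 G_\e\|_2\le\e^{-1}\|\p_x g_\e\|_2$.

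Using $f_\e\ge\e$ to obtain $(1+R)\int f_\e(\p_x^2 f_\e)^2\,dx \ge (1+R)\e\,\|\p_x^2 f_\e\|_2^2$, and choosing the Young parameter small enough to absorb the $\|\p_x^2 f_\e\|_2^2$ factors arising on the right, I would arrive at
\begin{equation*}
\frac{d}{dt}\|\p_x f_\e\|_2^2 \le \frac{C}{\e^3}\bigl(1+\|\p_x f_\e\|_2^2\bigr)\|\p_x g_\e\|_2^2,
\end{equation*}
and symmetrically $\frac{d}{dt}\|\p_x g_\e\|_2^2 \le (C/\e^3)(1+\|\p_x g_\e\|_2^2)\|\p_x f_\e\|_2^2$.

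These inequalities I would close by a \emph{decoupled} Gronwall step: treating $\|\p_x g_\e(s)\|_2^2$ as a prescribed source yields
\[
1+\|\p_x f_\e(t)\|_2^2 \le \bigl(1+\|\p_x f_{0\e}\|_2^2\bigr)\exp\!\left(\frac{C}{\e^3}\int_0^t\|\p_x g_\e(s)\|_2^2\, ds\right),
\]
and \eqref{spip} supplies the crucial estimate $\int_0^T\|\p_x g_\e\|_2^2\, ds\le C_2(T)$, while an elementary elliptic estimate (analogous to the derivation of \eqref{eq:id1}) gives $\|\p_x f_{0\e}\|_2\le \|f_0\|_2/\e$. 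Combined with the symmetric bound for $g_\e$, this produces the claimed constant $C(\e,T)$. The main obstacle is precisely this Gronwall step: one must structure the right-hand side so that it is \emph{linear} in $(1+\|\p_x f_\e\|_2^2)$ and multiplied by the time-integrable quantity $\|\p_x g_\e\|_2^2$, rather than a quadratic coupling such as $\|\p_x f_\e\|_2^2\|\p_x g_\e\|_2^2$ that would drive Riccati-type blow-up and yield only a local-in-time bound. This is exactly what the integration by parts above accomplishes by shifting the extra spatial derivative onto $G_\e$, where it is controlled at the cost of a factor $1/\e$.
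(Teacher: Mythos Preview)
Your argument is correct, but it follows a genuinely different route from the paper's. The paper multiplies the first equation by $\p_t q(f_\e)$ with $q(z)=z^2/2$, which produces the good term $\|\sqrt{f_\e}\,\p_t f_\e\|_2^2$ and the evolution of $\|\p_x q(f_\e)\|_2^2=\|f_\e\p_x f_\e\|_2^2$; the right-hand side is then controlled using $f_\e\ge\e$, the $L_\infty$-bound $\e^2\|\p_xG_\e\|_\infty\le\|g_\e\|_1$ (coming from the elliptic maximum principle), and \eqref{eq:es1s1}, leading to $\frac{d}{dt}\|\p_xq(f_\e)\|_2^2\le C(\e)(1+\|\p_x g_\e\|_2^2)(1+\|\p_xq(f_\e)\|_2^2)$. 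After Gronwall with \eqref{spip}, the bound on $\|\p_x f_\e\|_2$ is recovered from $\|\p_x q(f_\e)\|_2$ via $f_\e\ge\e$ at the very end.

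Your approach tests with $-\p_x^2 f_\e$ instead and exploits the Neumann cancellation $\int_0^L(\p_x f_\e)^2\p_x^2 f_\e\,dx=0$ to remove the cubic obstruction. This is more direct: it tracks $\|\p_x f_\e\|_2^2$ itself, avoids the $\|\p_x G_\e\|_\infty$ estimate (and hence the maximum-principle detour), and uses $f_\e\ge\e$ only to absorb $\|\p_x^2 f_\e\|_2^2$ into the dissipative term. Both arguments land on the same decoupled Gronwall structure $(1+\|\p_x f_\e\|_2^2)\cdot\|\p_x g_\e\|_2^2$, which is the essential point, and both rely on \eqref{spip} to close. The paper's choice is perhaps more in the spirit of porous-medium estimates (the natural flux variable is $q(f_\e)$), while yours is the standard parabolic $H^1$-energy estimate made to work thanks to the one-dimensional boundary identity.
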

%%%%%%%%%%%%%%%%%%%%%%%%%%%%%%%%%%%%%%%%

\begin{proof} We prove first the bound for $f_\e$. Given $z\in\R,$ let $q(z):=z^2/2.$ 
With this notation, the first equation of \eqref{eq:A1} { reads}
\[
\p_t f_\e-(1+R)\p_x^2 q(f_\e)=R\p_x((f_\e-\e)\p_xG_\e).
\]
 Multiplying  this relation by $\p_t q(f_\e)$ and integrating {over $(0,L)$},  we get    
 \[
\int_0^L\p_t f_\e \p_t q(f_\e)\, dx-(1+R)\int_0^L\p_x^2 q(f_\e) \p_t q(f_\e) \, dx=R\int_0^L\p_x((f_\e-\e)\p_xG_\e)\p_t q(f_\e)\, dx.
\]
{ Using an} integration by parts and Young's inequality, we come to the following inequality
 \[
\|\sqrt{f_\e}\p_tf_\e\|_2^2+\frac{1+R}{2}\frac{d}{dt}\|\p_xq(f_\e)\|_2^2\leq\frac{1}{2}\|\sqrt{f_\e}\p_tf_\e\|_2^2+\frac{R^2}{2}\int_0^Lf_\e[\p_x((f_\e-\e)\p_xG_\e)]^2\, dx.
\]
Whence, we have shown that
 \begin{equation}\label{eq:cvc}
\|\sqrt{f_\e}\p_tf_\e\|_2^2+(1+R)\frac{d}{dt}\|\p_xq(f_\e)\|_2^2\leq R^2\int_0^L \left[  f_\e^3 (\p_x^2G_\e)^2 + f_\e (\p_xf_\e)^2 (\p_xG_\e)^2 \right] \, dx,
\end{equation}
and the second term on the right-hand side of \eqref{eq:cvc} may be estimated, in view of \eqref{eq:GE1}, by 
\begin{align*}
\int_0^Lf_\e(\p_xf_\e)^2(\p_xG_\e)^2\, dx\leq&\frac{1}{\e}\int_0^Lf_\e^2 (\p_xf_\e)^2(\p_xG_\e)^2\, dx=\frac{1}{\e}\int_0^L(\p_xq(f_\e))^2(\p_xG_\e)^2\, dx\\[1ex]
\leq&\frac{1}{\e}\|\p_xG_\e\|_\infty^2\|\p_xq(f_\e)\|_2^2.
\end{align*}
Now, since $G_\e$ is the solution of $G_\e-\e^2\p_x^2G_\e=g_\e$  with homogeneous Neumann  boundary conditions at $x=0, L,$ and $g_\e\geq\e$, the elliptic maximum principle guarantees that  $G_\e\geq\e.$ 
Hence, $-\e^2\p_x^2G_\e\leq g_\e$, and therefore,  for $x\in (0,L)$,
\[
-\e^2\p_xG_\e(x)\leq \int_0^xg_\e\, dx  \le \|g_\e\|_1 \qquad\text{and}\qquad \e^2\p_xG_\e(x)\leq \int_x^Lg_\e\, dx   \le \|g_\e\|_1,
\]
so that $\e^2\|\p_xG_\e\|_\infty\leq \|g_\e\|_1.$ In view of \eqref{eq:GE2}, we arrive at
\begin{align}\label{eq:RHS1}
\int_0^Lf_\e(\p_xf_\e)^2(\p_xG_\e)^2\, dx\leq&\frac{1}{\e^5}\|g_\e\|_1^2\|\p_xq(f_\e)\|_2^2\leq C(\e)\|\p_xq(f_\e)\|_2^2.
\end{align}

Concerning the first term on the right-hand side of \eqref{eq:cvc},  it follows from \eqref{eq:GE1} and  \eqref{eq:es1s1} that
\begin{align*}
\int_0^L f_\e^3 (\p_x^2G_\e)^2\, dx\leq \frac{1}{\e} \int_0^Lf_\e^4 (\p_x^2G_\e)^2\, dx\le \frac{4}{\e}\|q(f_\e)\|_\infty^2 \|\p_x^2 G_\e\|_2^2 \leq \frac{4}{\e^3}\|q(f_\e)\|_\infty^2 \|\p_x g_\e\|_2^2.
\end{align*}
In order to estimate $\|q(f_\e)\|_\infty$, we choose  $x_\e\in(0,L)$ such that $Lf_\e(x_\e)=\|f_\e\|_1.$
By the fundamental theorem of calculus, we get
\[
q(f_\e)(x)=q(f_\e)( x_\e)+\int_{ x_\e}^x\p_x q(f_\e)\, dx\leq\frac{1}{2L^2}\|f_\e\|_1^2+\sqrt{L}\|\p_xq(f_\e)\|_2\quad\text{for all $x\in(0,L)$.}
\] 
 Summarising, we obtain in view of \eqref{eq:cvc} and \eqref{eq:RHS1}, that
\begin{equation}\label{eq:cvcv}
\frac{d}{dt}\|\p_xq(f_\e)\|_2^2\leq C(\e)(1+\|{\p_x g_\e}\|_2^2)(1+\|\p_xq(f_\e)\|_2^2),
\end{equation}
and from  \eqref{spip} and \eqref{eq:cvcv}
 \[
\|\p_xq(f_\e(t))\|_2^2\leq (1+\|\p_xq(f_{0\e})\|_2^2) \exp\left({C(\e)\int_0^t \left( 1+ \|\p_x g_\e\|_2^2 \right)\, ds}\right)\leq C(\e,T), \quad  t\in [0,T].
\] Since $f_\e\geq\e$, we then have
 \[
\|\p_xf_\e(t)\|_2^2\leq C(\e,T) \quad\text{for all $t\in[0,T]$.}
\]
Using  \eqref{eq:GE2} and the Poincar\'e-Wirtinger inequality, we finally obtain that
\[
\|f_\e(t)\|_{H^1}\leq C(\e,T) \quad\text{for all $t\in[0,T]$.}
\] 
Moreover, due to the symmetry of \eqref{eq:A1},  $g_\e$ satisfies the same estimate as $f_\e,$  and this completes our argument.
 \end{proof}

 \begin{proof}[Proof of Theorem~\ref{T:3}] The proof is a direct consequence of Lemma~\ref{L:1},  the lower bounds \eqref{eq:GE1}, and Lemma~\ref{L:4}.
 \end{proof}
 
 We end this section by showing that, though $\mathcal{E}_2$ is not dissipated along the trajectories of the regularised system \eqref{eq:RS}, a functional closely related to $\mathcal{E}_2$ is almost dissipated, with non-dissipative terms of order $\e$. 
 
\begin{lemma}\label{L:4.5} For $\e\in (0,1)$ and $T>0$, we have 
\begin{eqnarray}
\mathcal{E}_{2,\e}(f_\e(T),g_\e(T)) & + & \int_0^T \left[ f_\e \left| (1+R) \p_x f_\e + R \p_x G_\e \right|^2 + RR_\mu g_\e \left| \p_x(F_\e+g_\e) \right|^2 \right]\, dt \nonumber \\
& \le & \mathcal{E}_{2,\e}(f_{0\e},g_{0\e}) + \e C_2\ \int_0^T \varrho_\e(t)\, dt, \label{gaston}
\end{eqnarray}
with $\varrho_\e := \|\p_x f_\e\|_2^2 + \|\p_x g_\e\|_2^2$ and
\begin{equation}
2\mathcal{E}_{2,\e}(f_\e,g_\e) := (1+R) \|f_\e\|_2^2 + R \|g_\e\|_2^2 + R \int_0^L \left( F_\e g_\e + G_\e f_\e \right)\, dx. \label{fantasio} 
\end{equation}
\end{lemma}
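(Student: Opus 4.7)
The plan is to differentiate $\mathcal{E}_{2,\e}$ along the trajectories of \eqref{eq:RS}, rewrite the result via suitable test functions in the PDE, and absorb the discrepancy coming from the factors $f_\e - \e$ and $g_\e - \e$ into an $\cO(\e)$ remainder controlled by $\varrho_\e$.

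The key algebraic ingredient is the self-adjointness of $(1-\e^2\p_x^2)^{-1}$ on $L_2$ (relative to Neumann boundary data), which gives $\int_0^L F_\e g_\e\, dx = \int_0^L f_\e G_\e\, dx$ and, after time-differentiation,
\[
\frac{d}{dt}\int_0^L (F_\e g_\e + G_\e f_\e)\, dx = 2\int_0^L G_\e\, \p_t f_\e\, dx + 2\int_0^L F_\e\, \p_t g_\e\, dx.
\]
Combined with $\frac{d}{dt}\|f_\e\|_2^2 = 2\int_0^L f_\e\, \p_t f_\e\, dx$ and the analogous identity for $g_\e$, this gives
\[
\frac{d}{dt}\mathcal{E}_{2,\e}(f_\e,g_\e) = \int_0^L \p_t f_\e \bigl((1+R) f_\e + R G_\e\bigr)\, dx + R \int_0^L \p_t g_\e \bigl(g_\e + F_\e\bigr)\, dx,
\]
which singles out the test functions tailored to the conservative form of \eqref{eq:A1}.

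Substituting the PDE and integrating by parts, all boundary contributions vanish thanks to \eqref{A:2} and the inclusion $F_\e, G_\e \in H^2_\B$. Setting $A_\e := (1+R)\p_x f_\e + R \p_x G_\e$ and $B_\e := \p_x(F_\e + g_\e)$, the fluxes decompose as
\[
(1+R)f_\e \p_xf_\e + R(f_\e-\e)\p_xG_\e = f_\e A_\e - R\e \p_x G_\e,
\]
\[
R_\mu(g_\e-\e)\p_xF_\e + R_\mu g_\e \p_xg_\e = R_\mu \bigl( g_\e B_\e - \e \p_x F_\e \bigr),
\]
so a direct computation yields
\[
\frac{d}{dt}\mathcal{E}_{2,\e} = -\int_0^L f_\e A_\e^2\, dx - R R_\mu \int_0^L g_\e B_\e^2\, dx + R\e \int_0^L A_\e \p_x G_\e\, dx + R R_\mu \e \int_0^L B_\e \p_x F_\e\, dx.
\]

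To conclude, I would estimate the last two $\e$-terms by Cauchy--Schwarz and Young's inequality, invoking the contractivity bounds $\|\p_x F_\e\|_2 \le \|\p_x f_\e\|_2$ and $\|\p_x G_\e\|_2 \le \|\p_x g_\e\|_2$ from Lemma~\ref{L:2.5} to reduce everything to $\varrho_\e$; expanding $A_\e \p_x G_\e$ and $B_\e \p_x F_\e$ and using $2|ab|\le a^2+b^2$ yields a bound of the form $\e C_2 \varrho_\e$ for a constant $C_2 = C_2(R,R_\mu)$. Integrating from $0$ to $T$ then gives exactly \eqref{gaston}. I expect no serious obstacle here: the one creative step is identifying the correct test functions $(1+R) f_\e + R G_\e$ and $g_\e + F_\e$, but these are forced by the definition of $\mathcal{E}_{2,\e}$ together with the self-adjointness of the mollifier, and the completion of the square $f_\e A_\e^2$ is precisely what reveals this choice.
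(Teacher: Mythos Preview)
Your proposal is correct and follows essentially the same approach as the paper: both multiply the two equations of \eqref{eq:RS} by the test functions $(1+R)f_\e + RG_\e$ and $R(F_\e+g_\e)$, obtain the dissipative terms $f_\e A_\e^2$ and $RR_\mu g_\e B_\e^2$, and control the $\e$-remainders via \eqref{eq:es1}. Your derivation of the time-derivative identity is in fact a bit more direct than the paper's---you use the self-adjointness of $(1-\e^2\p_x^2)^{-1}$ to write $\int_0^L F_\e g_\e\,dx=\int_0^L f_\e G_\e\,dx$ and differentiate immediately, whereas the paper passes through the intermediate expression $\int_0^L (F_\e G_\e+\e^2\p_x F_\e\p_x G_\e)\,dx$ before recognising it as $\tfrac12\int_0^L(F_\e g_\e+G_\e f_\e)\,dx$---but the content is identical.
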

 
\begin{proof}
 We multiply the first equation of \eqref{eq:RS} by $(1+R) f_\e + R G_\e$ and integrate over $(0,L)$ to obtain
\begin{equation}
\int_0^L \p_t f_\e\left( (1+R)f_\e+RG_\e \right)\, dx = -\int_0^L f_\e \left( (1+R)\p_x f_\e + R \p_xG_\e \right)^2\, dx + I_{1,\e}
\label{eq:zz1}
\end{equation}
with
$$
I_{1,\e}:=\e R\int_0^L \p_x G_\e \left( (1+R) \p_x f_\e + R \p_x G_\e\right)\, dx.
$$
Thanks to H\"older's inequality and \eqref{eq:es1}, we have
\begin{equation}
|I_{1,\e}| \le \e R(1+R) \left( \|\p_x G_\e\|_2 \|\p_x f_\e\|_2 + \|\p_x G_\e\|_2^2 \right) \le \e C \left( \|\p_x f_\e\|_2^2 + \|\p_x g_\e\|_2^2 \right)\,.
\label{eq:zz2}
\end{equation}
Similarly, multiplying the second equation of \eqref{eq:RS} by $R (F_\e+g_\e)$ and integrating over $(0,L)$ give
\begin{equation}
R\int_0^L \p_t g_\e \left( F_\e + g_\e \right)\, dx = - RR_\mu \int_0^L g_\e \left( \p_x F_\e + \p_x g_\e \right)^2\, dx + I_{2,\e}
\label{eq:zz3}
\end{equation}
with 
$$
I_{2,\e}:=\e RR_\mu\int_0^L\p_x F_\e \p_x\left( F_\e + g_\e \right)\, dx.
$$
Using again H\"older's inequality and \eqref{eq:es1}, we obtain
\begin{equation}
|I_{2,\e}| \le \e R R_\mu \left( \|\p_x F_\e\|_2^2 + \|\p_x g_\e\|_2 \|\p_x F_\e\|_2 \right) \le \e C \left( \|\p_x f_\e\|_2^2 + \|\p_x g_\e\|_2^2 \right)\,.
\label{eq:zz4}
\end{equation}
Observing that 
\begin{align*}
&  \int_0^L \p_t f_\e\left( (1+R)f_\e+RG_\e \right)\, dx + R\int_0^L \p_t g_\e \left( F_\e + g_\e \right)\, dx \\
& \phantom{spacespace}=  \frac{1+R}{2} \frac{d}{dt} \|f_\e\|_2^2 + R \int_0^L G_\e \left( \partial_t F_\e - \e^2 \p_x^2 \p_t F_\e \right)\, dx \\
&  \phantom{spacespace=}+ \frac{R}{2} \frac{d}{dt} \|g_\e\|_2^2 + R \int_0^L F_\e \left( \partial_t G_\e - \e^2 \p_x^2 \p_t G_\e \right)\, dx \\
& \phantom{spacespace}=  \frac{1}{2} \frac{d}{dt} \left( (1+R) \|f_\e\|_2^2 + R \|g_\e\|_2^2 + 2R \int_0^L F_\e G_\e\, dx \right) \\
&  \phantom{spacespace=}+ \e^2 R \int_0^L \left( \p_x G_\e \p_t \p_x F_\e + \p_x F_\e \p_t \p_x G_\e \right)\, dx \\
&\phantom{spacespace} =  \frac{1}{2} \frac{d}{dt} \left( (1+R) \|f_\e\|_2^2 + R \|g_\e\|_2^2 + 2R \int_0^L \left( F_\e G_\e + \e^2 \p_x F_\e \p_x G_\e \right)\, dx \right), 
\end{align*}
we sum \eqref{eq:zz1} and \eqref{eq:zz3}, use \eqref{eq:zz2} and \eqref{eq:zz4} to obtain
\begin{align*}
&  \frac{1}{2} \frac{d}{dt} \left( (1+R) \|f_\e\|_2^2 + R \|g_\e\|_2^2 + 2R \int_0^L \left( F_\e G_\e + \e^2 \p_x F_\e \p_x G_\e \right)\, dx \right) \\
& \phantom{spacespace}\le  - \int_0^L f_\e \left( (1+R)\p_x f_\e + R \p_xG_\e \right)^2\, dx - RR_\mu \int_0^L g_\e \left( \p_x F_\e + \p_x g_\e \right)^2\, dx + \e C_2\ \varrho_\e.
\end{align*}
Since 
\begin{eqnarray*}
2 \int_0^L \left( F_\e G_\e + \e^2 \p_x F_\e \p_x G_\e \right)\, dx & = & \int_0^L \left( 2 F_\e G_\e - \e^2 G_\e \p_x^2 F_\e -\e^2 F_\e \p_x^2 G_\e \right)\, dx \\ 
& = & \int_0^L \left( G_\e f_\e + F_\e g_\e \right)\, dx ,
\end{eqnarray*}
the claimed inequality follows from the above two identities after integration with respect to time.
\end{proof}

%%%%%%%%%%%%%%%%%%%%%%%%%%%%%%%%%%%%%%%%
%%%%%%%%%%%%%%%%%%%%%%%%%%%%%%%%%%%%%%%%
\section{ Weak solutions}\label{sec:2}
%%%%%%%%%%%%%%%%%%%%%%%%%%%%%%%%%%%%%%%%
%%%%%%%%%%%%%%%%%%%%%%%%%%%%%%%%%%%%%%%%

Given $T\in(0,\infty]$, we let $Q_T:=(0,T)\times(0,L).$ Furthermore, given $\e\in(0,1),$ we let $(f_\e,g_\e)$ be the global { strong} solution  of the regularised problem \eqref{eq:RS}  constructed in Theorem~\ref{T:3}. We shall prove that $(f_\e, g_\e)$ converges, in appropriate function spaces over $Q_T$, towards  { a pair of functions} $(f,g)$ which turns out to be a weak solution  of  \eqref{eq:S2} in the sense of Theorem~\ref{T:1}.

Recall that, by  \eqref{eq:GE1},  \eqref{eq:id1}, \eqref{eq:GE2}, and \eqref{eq:GE3}, $(f_\e,g_\e)$ satisfies the following estimates
\begin{equation}\label{eq:global}
\begin{aligned}
(a)&\quad f_\e\geq\e, \qquad g_\e\geq \e \qquad\text{on $Q_\infty$},\\[1ex]
(b)&\quad \|f_{0\e}\|_{2}+\|g_{0\e}\|_{2}\leq\|f_{0}\|_{2}+\|g_0\|_{2}+2{ \sqrt{L}},\\[1ex]
(c)&\quad \|f_\e(t)\|_1=\|f_0\|_1+\e L, \qquad \|g_\e(t)\|_1=\|g_0\|_1+\e L, \\[1ex]
(d)&\quad \E_1(f_\e(t), g_\e(t))+\int_0^t \left( \frac{1}{2} \|\p_xf_\e\|_2^2+\frac{R}{1+2R} \|\p_x g_\e\|_2^2 \right)\, ds\leq \E_1(f_\e(0), g_\e(0)),
\end{aligned}
 \end{equation}
{ for $t\ge 0$.}
Using \eqref{eq:global}, we show that:

%%%%%%%%%%%%%%%%%%%%%%%%%%%%%%%%%%%%%%%%
\begin{lemma}[Uniform estimates]\label{L:5} Let $h\in\{f,g,F, G\}.$
There exists a positive constant { $C_4(T)$} such that, for all $(\e,T)\in(0,1)\times(0,\infty),$ we have
\begin{align}
\label{eq:UE1}
(i)\quad&\int_0^T { \left( \| h_\e(t)\|_{H^1}^2 + \|h_\e(t)\|_3^3 \right)\, dt \leq C_4(T)},\\[1ex]
\label{eq:UE2}
(ii)\quad&\int_0^T\|\p_t { h_\e(t)}\|_{(W^1_6)'}^{6/5}\, dt\leq {C_4(T)}.
\end{align}
\end{lemma}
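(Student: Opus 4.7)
The plan is to prove (i) first and then use it, together with the weak formulation of \eqref{eq:A1}, to derive (ii). The case $h\in\{F,G\}$ will require an additional duality argument that I expect to be the main obstacle, since uniformity in $\e$ is not immediately clear there.

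For (i), Lemma~\ref{L:2.5} gives $\|F_\e\|_{H^1}\le \|f_\e\|_{H^1}$ and $\|G_\e\|_{H^1}\le \|g_\e\|_{H^1}$, while integrating the resolvent equation $F_\e-\e^2\p_x^2 F_\e=f_\e$ over $(0,L)$ (using the Neumann condition on $F_\e$ and $F_\e\ge 0$ by the elliptic maximum principle) yields $\|F_\e\|_1=\|f_\e\|_1$. So it suffices to prove (i) for $h\in\{f,g\}$. The $L_2((0,T),H^1)$ bound has already been obtained in \eqref{spip} from \eqref{eq:global}(d), Poincaré-Wirtinger, and the $L_1$-conservation \eqref{eq:global}(c). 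For the $L_3^3$ bound in space-time I would invoke the one-dimensional Gagliardo-Nirenberg inequality (with $\theta=4/9$)
\[
\|u\|_3^3 \le C\bigl(\|\p_x u\|_2^{4/3}\|u\|_1^{5/3}+\|u\|_1^3\bigr),
\]
apply it to $u=f_\e$ or $g_\e$, use \eqref{eq:global}(c), and then Hölder in time with exponents $(3/2,3)$ to bound $\int_0^T\|\p_x u\|_2^{4/3}\,dt\le T^{1/3}\bigl(\int_0^T\|\p_x u\|_2^2\,dt\bigr)^{2/3}$, the right-hand side being finite by \eqref{eq:global}(d).

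For (ii) with $h\in\{f,g\}$ I would test the first equation of \eqref{eq:A1} against $\psi\in W^1_6$ and use $|f_\e-\e|\le f_\e$, Hölder's inequality with exponents $(3,2,6)$ (so that $1/3+1/2+1/6=1$), and Lemma~\ref{L:2.5} to obtain
\[
\|\p_t f_\e\|_{(W^1_6)'}\le C\,\|f_\e\|_3\bigl(\|\p_x f_\e\|_2+\|\p_x g_\e\|_2\bigr).
\]
Raising to the power $6/5$ and invoking Hölder in time with exponents $(5/2,5/3)$ then bounds $\int_0^T\|\p_t f_\e\|_{(W^1_6)'}^{6/5}\,dt$ by
\[
C\Bigl(\int_0^T\|f_\e\|_3^3\,dt\Bigr)^{2/5}\Bigl(\int_0^T\bigl(\|\p_x f_\e\|_2^2+\|\p_x g_\e\|_2^2\bigr)\,dt\Bigr)^{3/5},
\]
which is finite by (i) and \eqref{eq:global}(d). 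The argument for $g_\e$ is symmetric.

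The hard part is (ii) for $h\in\{F,G\}$. I would argue by duality: since $(1-\e^2\p_x^2)^{-1}$ is self-adjoint on $L_2$,
\[
\langle \p_t F_\e,\psi\rangle=\langle \p_t f_\e,(1-\e^2\p_x^2)^{-1}\psi\rangle
\]
for $\psi\in W^1_6$, so it is enough to prove that $\|(1-\e^2\p_x^2)^{-1}\psi\|_{W^1_6}\le \|\psi\|_{W^1_6}$ \emph{uniformly in $\e$}. The Neumann Green function of $1-\e^2\p_x^2$ is non-negative (by the maximum principle) with row and column sums equal to $1$, because the constant function $1$ solves the homogeneous equation with right-hand side $1$; Schur's test then gives norm $\le 1$ on $L_6$. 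Differentiating the resolvent equation shows that $V_\e:=\p_x(1-\e^2\p_x^2)^{-1}\psi$ satisfies $V_\e-\e^2\p_x^2V_\e=\p_x\psi$ with Dirichlet conditions $V_\e(0)=V_\e(L)=0$ inherited from the Neumann condition; the corresponding Green function is again non-negative and has row sums $\le 1$ (the constant-$1$ right-hand side produces a solution bounded by $1$ by the maximum principle), so Schur yields $\|V_\e\|_6\le \|\p_x\psi\|_6$. Combining the two estimates gives the claimed uniform bound, hence $\|\p_t F_\e\|_{(W^1_6)'}\le \|\p_t f_\e\|_{(W^1_6)'}$, and (ii) for $F_\e$ follows from the case $h=f_\e$. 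The case $G_\e$ is analogous.
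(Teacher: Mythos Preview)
Your proposal is correct and follows essentially the same strategy as the paper's proof. The only tactical differences are: for the $L_3$-bound the paper uses the simpler interpolation $\|f_\e\|_3^3\le\|f_\e\|_\infty^2\|f_\e\|_1$ together with $H^1\hookrightarrow L_\infty$ instead of Gagliardo--Nirenberg, and for $h\in\{F,G\}$ the paper first integrates by parts so that only the $L_6$-contraction property of the resolvent on $\p_x\phi$ is needed (which it states without the Green-function justification you supply), rather than the full $W^1_6$-bound you establish.
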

%%%%%%%%%%%%%%%%%%%%%%%%%%%%%%%%%%%%%%%%

\begin{proof}  The estimate for $h_\e$ in $L_2(0,T,H^1)$ is obtained from  the energy estimate \eqref{eq:GE3}, by taking also into account relations \eqref{eq:GE2}, \eqref{eq:es1s}, \eqref{eq:es1}, \eqref{eq:M}, the nonnegativity of $\mathcal{E}_1$, and the Poincar\'e-Wirtinger inequality as in the proof of \eqref{spip}.
 In order to prove the second estimate of \eqref{eq:UE1}, we note  that, since $H^1$ is continuously embedded in $L_\infty$ and $(f_\e)$ is bounded in $L_2(0,T,H^1)$, $(f_\e)$ is uniformly bounded with respect to $\e$  in $L_\infty(0,\infty; L_1)\cap L_2(0,T; L_\infty)$ for all $T>0.$ 
 The claimed $L_3-$bound then follows from the inequality $\|f_\e\|_3^3 \le \|f_\e\|_\infty^2 \|f_\e\|_1$. 
Next, an obvious consequence of the definition of $F_\e$ is that $\|F_\e\|_p\le\|f_\e\|_p$ for $p\in [1,\infty]$, from which we deduce the expected bound in $L_3$ for $(F_\e)$. A similar argument shows that $(g_\e)$ and $(G_\e)$  satisfy the second estimate in \eqref{eq:UE1}. 

In order to prove $(ii)$, consider first $h\in\{f,g\}.$ From \eqref{eq:UE1} and H\"older's inequality we obtain that $(f_\e \p_x f_\e)$, $((f_\e-\e)\p_x G_\e)$, $((g_\e-\e)\p_x F_\e)$, and $(g_\e \p_x g_\e)$ are uniformly bounded in $L_{6/5}(Q_T)$. Therefore, the  equations of \eqref{eq:A1}  may be written in the form  $\p_t h_\e=\p_x H^h_\e,$ for some function $H^h _\e$ which is uniformly bounded in $L_{6/5}(Q_T)$ and satisfies homogeneous  Dirichlet conditions $H^h _\e(0)=H^h _\e(L)=0.$ Given $\phi\in W^1_6,$  we have 
\begin{align*}
\left|\langle \p_th_\e|\phi\rangle_{L_2}\right|=&\left|\int_0^L\phi \p_xH^h_\e\, dx\right|=\left|\int_0^LH^h_\e\p_x\phi\, dx\right|\leq \|H^h_\e\|_{6/5} \ \|\p_x\phi\|_6.
\end{align*}
{ Consequently,}
\begin{align*}
\|\p_t h_\e(t)\|_{\left(W^1_6\right)'}\leq \|H^h_\e(t)\|_{6/5}\qquad\text{ { for} $t\in(0,T).$}
\end{align*}
and the families $(f_\e)$, $(g_\e)$ are both uniformly bounded in $L_{6/5}(0,T;\left(W^1_6\right)')$.

Finally, we have $\langle (1-\e^2\p_x^2 )p|q\rangle_{L_2}=\langle p|(1-\e^2\p_x^2 ) q\rangle_{L_2}$ for all $p,q\in H^2_\B$, 
and choosing $p:=(1-\e^2\p_x^2)^{-1}\p_t f_\e$, $q=(1-\e^2\p_x^2)^{-1}\phi$ with $\phi\in W^1_6$,
we have 
\begin{align*}
\langle\p_t F_\e|\phi\rangle_{L_2}=&\langle (1-\e^2\p_x^2)^{-1} \p_t f_\e|\phi\rangle_{L_2}=\langle  \p_t f_\e|(1-\e^2\p_x^2)^{-1}\phi\rangle_{L_2}\\[1ex]
=&\langle  \p_xH^f_\e|(1-\e^2\p_x^2)^{-1}\phi\rangle_{L_2}=-\langle  H^f_\e|(1-\e^2\p_x^2)^{-1}\p_x\phi\rangle_{L_2}. 
\end{align*}
Since $(1-\e^2\p_x^2)^{-1}$ is a contraction in $L_6$, we obtain that
\[
|\langle\p_t F_\e|\phi\rangle_{L_2}|\leq \|H^f_\e\|_{6/5}\|(1-\e^2\p_x^2)^{-1}\p_x\phi\|_6\leq  \|H^f_\e\|_{6/5}\|\phi\|_{W^1_6},
\]
and the assertion $(ii)$, when $h=F,$ follows at once. Invoking { a similar argument for $(G_\e)$}, we complete the proof.
\end{proof}

This lemma enables us to use a result from \cite{JS} and show that $(f_\e),$ $(g_\e),$ $(F_\e),$ and $(G_\e)$ are relatively compact in $L_2(0,T; C^{\alpha}([0,L])),$
provided that $\alpha\in(0,1/2).$
This will allow us to identify a limit point for each of these sequences, and find in this way a candidate for solving \eqref{eq:S2}.
Indeed, we have:

%%%%%%%%%%%%%%%%%%%%%%%%%%%%%%%%%%%%%%%%
\begin{lemma}\label{L:6} Given $h\in\{f,g,F,G\},$ and $\alpha\in (0,1/2),$ there exists a subsequence $(h_{\e_k})$ of   $(h_\e)$ which converges strongly in $L_2(0,T; C^{\alpha}([0,L]))$. 
\end{lemma}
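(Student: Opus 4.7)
The plan is to apply a compactness criterion of Simon type, as hinted by the reference \cite{JS} in the text preceding the lemma. The key observation is that Lemma~\ref{L:5} already provides the two ingredients needed: a uniform bound on $(h_\e)$ in $L_2((0,T),H^1)$ together with a uniform bound on $(\p_t h_\e)$ in $L_{6/5}((0,T),(W^1_6)')$.

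First I would set up the correct embedding chain. In one space dimension, Morrey's theorem gives the continuous embedding $H^1((0,L))\hookrightarrow C^{1/2}([0,L])$; combined with the Arzel\`a-Ascoli theorem (or with a standard interpolation $\|\cdot\|_{C^\alpha}\le C\|\cdot\|_{C^{1/2}}^{2\alpha}\|\cdot\|_\infty^{1-2\alpha}$ and the compact embedding $H^1\hookrightarrow\hookrightarrow C([0,L])$), this yields the compact embedding
\[
H^1((0,L))\hookrightarrow\hookrightarrow C^\alpha([0,L])\qquad\text{for every }\alpha\in(0,1/2).
\]
On the other side, $C^\alpha([0,L])\hookrightarrow L_\infty\hookrightarrow L_{6/5}\hookrightarrow (W^1_6)'$ continuously, so that $C^\alpha([0,L])$ sits between $H^1$ and $(W^1_6)'$ with a compact embedding on the left.

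Second, I would invoke \cite[Cor.~4, Thm.~5]{JS} (Simon's compactness theorem) with $X=H^1$, $B=C^\alpha([0,L])$, $Y=(W^1_6)'$, $p=2$, $r=6/5$. By Lemma~\ref{L:5}(i) the family $(h_\e)_{\e\in(0,1)}$ is bounded in $L_2((0,T),H^1)$, and by Lemma~\ref{L:5}(ii) the family $(\p_t h_\e)_{\e\in(0,1)}$ is bounded in $L_{6/5}((0,T),(W^1_6)')$. Simon's theorem then gives that $(h_\e)$ is relatively compact in $L_2((0,T),C^\alpha([0,L]))$, so that one can extract a subsequence $(h_{\e_k})$ converging strongly in this space.

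I do not anticipate any genuine obstacle: all the hard work has been done in Lemmas~\ref{L:3}--\ref{L:5}, where the $L_2(H^1)$ and $L_{6/5}((W^1_6)')$ bounds were established independently of $\e$ (in particular the time-derivative bounds for $F_\e$ and $G_\e$ were obtained by exploiting that $(1-\e^2\p_x^2)^{-1}$ is a contraction on $L_6$). The only point worth being careful about is that the constants in the embeddings $H^1\hookrightarrow\hookrightarrow C^\alpha$ and $C^\alpha\hookrightarrow (W^1_6)'$ do not depend on $\e$, which is obvious since these spaces themselves are $\e$-independent.
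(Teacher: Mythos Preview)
Your proof is correct and follows essentially the same route as the paper: the same embedding chain $H^1\hookrightarrow\hookrightarrow C^\alpha([0,L])\hookrightarrow (W^1_6)'$, the same appeal to Lemma~\ref{L:5}, and the same application of Simon's compactness result. The only cosmetic difference is that the paper states the time-derivative bound as an $L_1((0,T),(W^1_6)')$ bound (which follows from the $L_{6/5}$ bound on a finite interval) before invoking \cite[Corollary~4]{JS}, whereas you keep $r=6/5$.
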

%%%%%%%%%%%%%%%%%%%%%%%%%%%%%%%%%%%%%%%%

\begin{proof} Invoking the Rellich-Kondrachov theorem,  we have the following sequence of embeddings
\[
H^1\hookrightarrow  C^{\alpha}([0,L])\hookrightarrow \left(W^1_6\right)',\qquad \alpha<1/2,
\]
with compact embedding $H^1\hookrightarrow  C^{\alpha}([0,L]).$ 
Furthermore, in view of Lemma~\ref{L:5}~$(i)$, the family $(h_\e)$ is uniformly bounded in $L_2(0,T;H^1),$ while, by { Lemma~\ref{L:5}~$(ii)$}, $(\p_th_\e)$ is uniformly bounded in 
 $L_1(0,T; \left(W^1_6\right)')$.
 Whence, the assumptions of \cite[Corollary 4]{JS} are all fulfilled, and we conclude that $(h_\e)$ is relatively compact in  $L_2(0,T; C^{\alpha}([0,L])).$
\end{proof}
 
%%%%%%%%%%%%%%%%%%%%%%%%%%%%%%%%%%%%%%%%
%%%%%%%%%%%%%%%%%%%%%%%%%%%%%%%%%%%%%%%%
\subsection{Construction of weak solutions}
%%%%%%%%%%%%%%%%%%%%%%%%%%%%%%%%%%%%%%%%
%%%%%%%%%%%%%%%%%%%%%%%%%%%%%%%%%%%%%%%%

Using the uniform estimates deduced at the beginning of this section, we { now establish the} existence of a weak solution of \eqref{eq:S2}. { Owing to Lemma~\ref{L:6}, there are $f,g, F, G\in L_2(0,T; C^{\alpha}([0,L]))$ such that, for $\alpha\in (0,1/2)$,}
\begin{equation}\label{eq:func}
 \begin{aligned}
 f_{\e_k}\to f,\quad  g_{\e_k}\to g,\quad  F_{\e_k}\to F,\quad  G_{\e_k}\to G\qquad \text{in $L_2(0,T; C^{\alpha}([0,L])).$}
 \end{aligned}
 \end{equation} 
 Furthermore, by Lemma~\ref{L:5} $(i)$, the subsequences $({ \p_x} f_{\e_k}),$ $({ \p_x} g_{\e_k}),$ $({ \p_x} F_{\e_k}),$ and $ ({ \p_x}G_{\e_k})$ are uniformly bounded in the Hilbert space $L_2(Q_T).$ 
Hence, we may extract  further subsequences (denoted again by $(f_{\e_k}), (g_{\e_k}), (F_{\e_k}),$ and $ (G_{\e_k})$) which converge weakly:
 \begin{equation}\label{eq:der}
 \begin{aligned}
 \p_x f_{\e_k}\rightharpoonup { \p_x} f,\quad  \p_xg_{\e_k}\rightharpoonup { \p_x} g,\quad  \p_xF_{\e_k}\rightharpoonup { \p_x} F,\quad  \p_x G_{\e_k}\rightharpoonup { \p_x} G\qquad \text{in $L_2(Q_T).$}
 \end{aligned}
 \end{equation}
In fact,  we  have that
\begin{align}\label{eq:ident}
f=F\quad\text{and}\quad g=G\qquad\text{a.e. { in $Q_T$}.}
\end{align}
  Indeed, \eqref{eq:ident} follows by multiplying the relation $F_{\e_k}-\e_k^2\p_x^2F_{\e_k}=f_{\e_k}$ by a test function in $H^1$, integrating by parts,  and letting then 
{ $k\to \infty$ with the help of \eqref{eq:func} and \eqref{eq:der}.}
In view of \eqref{eq:func}-\eqref{eq:ident},  we then have
 \begin{equation}\label{eq:derr}
 \begin{aligned}
&f_{\e_k} \p_x f_{\e_k}\rightharpoonup  f\p_x  f,\quad  f_{\e_k} \p_xG_{\e_k}\rightharpoonup  f\p_xg\qquad \text{in $L_1(Q_T),$}\\[1ex]
 &g_{\e_k}\p_x F_{\e_k}\rightharpoonup  g\p_xf ,\quad g_{\e_k}\p_xg_{\e_k}\rightharpoonup  g\p_x g\qquad \text{in $L_1(Q_T).$}
 \end{aligned}
 \end{equation}
 
Using the fact that  $(f_\e,g_\e)$ are strong solutions of \eqref{eq:RS}, we obtain by integration with respect to space and time that 
 \begin{equation}\label{eq:A1-}
\begin{aligned}
\int_0^L f_{\e_k}(T)\psi\, dx-\int_0^Lf_{0\e_k}\psi\, dx=& { -} \int_{Q_T}  \left[ (1+R) f_{\e_k} \p_x{f_{\e_k}} + R (f_{\e_k}-\e_k) \p_x{G_{\e_k}} \right]\ \p_x\psi\, dx\, dt,\\[1ex]
\int_0^L g_{\e_k}(T)\psi\, dx-\int_0^Lg_{0\e_k}\psi\, dx=& { -} R_\mu\int_{Q_T}  \left[  (g_{\e_k}-\e_k) \p_x{F_{\e_k}} + g_{\e_k} \p_x{g_{\e_k}} \right]\ \p_x\psi\, dx\, dt,
\end{aligned}
\end{equation}
 for all $T>0,$ $\e\in(0,1),$ and $\psi\in W^1_\infty.$ Since
\begin{equation}\label{eq:maam}
f_{0\e_k}\to f_0\quad\text{and}\quad g_{0\e_k}\to g_0\qquad\text{ in $L_2$}
\end{equation}
by classical arguments, we may pass to the limit as $k\to\infty$ in \eqref{eq:A1-} and use \eqref{eq:func}, \eqref{eq:der}, \eqref{eq:derr}, and \eqref{eq:maam} to conclude that $(f,g)$ is a weak solution of \eqref{eq:RS} in the sense of Theorem~\ref{T:1}. The fact that $(f,g)$ can be defined  globally follows by using a standard Cantor's diagonal argument (using a sequence $T_n\nearrow \infty$).
 
%%%%%%%%%%%%%%%%%%%%%%%%%%%%%%%%%%%%%%%%
%%%%%%%%%%%%%%%%%%%%%%%%%%%%%%%%%%%%%%%%
\subsection{Energy estimates for weak solutions}
%%%%%%%%%%%%%%%%%%%%%%%%%%%%%%%%%%%%%%%%
%%%%%%%%%%%%%%%%%%%%%%%%%%%%%%%%%%%%%%%%

 Letting $\e\to 0$ in the relation \eqref{eq:global} $(c)$, we find in view of \eqref{eq:func},
 that 
 \[
{ \|f(t)\|_1=\|f_0\|_1 ,\quad \|g(t)\|_1=\|g_0\|_1, \qquad t\in(0,\infty).}
\]

We show now that the weak solution found above satisfies the energy estimate
\begin{equation}\label{E:1}
\E_1(f(T),g(T))+\int_{Q_T} \left( \frac{1}{2}|\p_xf|^2+\frac{R}{1+2R}|\p_xg|^2 \right)\, dx\, dt\leq  \E_1(f_0,g_0)
\end{equation}
for  $T\in(0,\infty).$ 
Recall that, by Lemma~\ref{L:3}, we have  
\begin{align}\label{eq:he1}
&\E_1(f_{\e_k}(T), g_{\e_k}(T))+\int_{Q_T} \left( \frac{1}{2}|\p_xf_{\e_k}|^2+\frac{R}{1+2R}|\p_x g_{\e_k}|^2 \right)\, dx\, dt\leq\E_1(f_{0\e_k}, g_{0\e_k})
\end{align} 
 for all $k\in\N.$ 
On the one hand, note that  \eqref{eq:func} and Fatou's lemma ensure that
 \begin{equation}\label{eq:E20}
\E_1(f(T), g(T))\leq \liminf_{k\to\infty}\E_1(f_{\e_k}(T), g_{\e_k}(T))\qquad\text{for $T\in(0,\infty),$}
\end{equation}
 while  \eqref{eq:der}   implies 
\begin{equation}\label{eq:E21}
 \begin{aligned}
 \int_{Q_T}|\p_xf|^2\, dx\, dt&\leq\liminf_{k\to\infty}\int_{Q_T}|\p_xf_{\e_k}|^2\, dx\, dt,\\[1ex]
 \int_{Q_T}|\p_xg|^2\, dx\, dt&\leq\liminf_{k\to\infty}\int_{Q_T}|\p_xg_{\e_k}|^2\, dx\, dt.
 \end{aligned}
 \end{equation}
We still have to pass to the limit in the right-hand side of \eqref{eq:he1}. By \eqref{eq:maam}, we may assume that $(f_{0\e_k})$ and $(g_{0\e_k})$ converge almost everywhere towards $f_0$ and $g_0,$ respectively. Furthermore, since $0\leq x\ln x-x+1\leq 1+2x^{3/2}$ for  $x\geq0,$ we have
\[
\int_{E}|f_{0\e_k} \ln f_{0\e_k} - f_{0\e_k} + 1|\, dx\leq |E|+2\int_E f_{0\e_k}^{3/2}\, dx\leq |E|+2|E|^{1/4}\|f_{0\e_k}\|_2^{3/2}\leq C|E|^{1/4}
\] 
 for all $k\in\N$ and all measurable subsets $E$ of $(0,L),$ meaning that the family $( f_{0\e_k} \ln f_{0\e_k} - f_{0\e_k} + 1)$ is uniformly integrable. Clearly, the same is true also for $( g_{0\e_k} \ln g_{0\e_k} - g_{0\e_k} + 1).$ We infer then from  Vitali's convergence theorem, cf. \cite[Theorem 2.24]{FL}, that the limit of the right-hand side of \eqref{eq:he1} exists and
\[
\lim_{k\to\infty }\E_1(f_{0\e_k}, g_{0\e_k})=\E_1(f_{0}, g_{0}). 
\]
Whence, passing to the limit in \eqref{eq:he1}, we obtain in view of \eqref{eq:E20} and \eqref{eq:E21} the desired estimate \eqref{E:1}.

Finally, we show that weak solutions of \eqref{eq:S2} satisfy 
 \begin{equation}\label{E:2}
 \E_2(f(T),g(T))+\int_{Q_T} \left[ f\left((1+R)\p_xf+R\p_xg\right)^2+RR_\mu g(\p_xf+\p_xg)^2 \right] \, dx\, dt\leq \E_2(f_{0},g_{0})
 \end{equation}
for $T\in(0,\infty).$ In virtue of \eqref{eq:func}, \eqref{eq:der}, { and \eqref{eq:ident}} we have 
\[
\sqrt{f_{\e_k}}\to \sqrt{f}, \quad \p_x f_{\e_k}\rightharpoonup \p_x f, \quad\text{and}\quad { \p_x G_{\e_k}\rightharpoonup \p_x g}\qquad\text{in $L_2(Q_T)$}
\]  
 which implies that $\sqrt{f_{\e_k}}\p_xf_{\e_k}\rightharpoonup\sqrt{f}\p_xf$ { and $\sqrt{f_{\e_k}}\p_x G_{\e_k} \rightharpoonup \sqrt{f} \p_x g$} in $L_1(Q_T).$ 
{ Consequently,}
$$
\sqrt{f_{\e_k}}\left((1+R)\p_xf_{\e_k}+R\p_xG_{\e_k}\right)\rightharpoonup \sqrt{f}\left((1+R)\p_xf+R\p_xg\right)\qquad \text{in $L_1(Q_T).$}
$$
and, by a similar argument,
$$
\sqrt{g_{\e_k}}(\p_xF_{\e_k}+\p_xg_{\e_k})\rightharpoonup \sqrt{g}(\p_xf+\p_xg)\qquad \text{in $L_1(Q_T).$}
$$
{ Now, owing to \eqref{eq:UE1}, the sequence $(\varrho_{\e_k})$ defined in Lemma~\ref{L:4.5} is bounded in  $L_2((0,T))$ and we then infer from Lemma~\ref{L:4.5} that both $(\sqrt{f_{\e_k}}\left((1+R)\p_xf_{\e_k}+R\p_xG_{\e_k}\right))$ and $(\sqrt{g_{\e_k}}(\p_xF_{\e_k}+\p_xg_{\e_k}))$ are bounded in $L_2(Q_T)$. The previous weak convergences in $L_1(Q_T)$ may then be improved to weak convergence in $L_2(Q_T)$ (upon extracting a further subsequence if necessary) and we can then pass to the limit in \eqref{gaston} to conclude that \eqref{E:2} holds true, using weak lower semicontinuity arguments in the left-hand side and the property $\e_k \varrho_{\e_k}\to 0$ in  $L_2((0,T))$ in the right-hand side.}

%%%%%%%%%%%%%%%%%%%%%%%%%%%%%%%%%%%%%%%%
%%%%%%%%%%%%%%%%%%%%%%%%%%%%%%%%%%%%%%%%
\subsection{Exponential convergence towards equilibria} 
%%%%%%%%%%%%%%%%%%%%%%%%%%%%%%%%%%%%%%%%
%%%%%%%%%%%%%%%%%%%%%%%%%%%%%%%%%%%%%%%%

In this last part of the paper we prove our second main result, Theorem~\ref{T:2}.
 The proof is based on the  interplay between estimates for  the two energy functionals $\E_1$ and $\E_2,$
 with the specification that we use $\E_1$ to estimate the time derivative of the stronger energy functional $\E_2$,
 and obtain exponential decay of weak solutions in the $L_2-$norm.
 Recall from \eqref{eq:global}~ $(c)$  that 
\[
A_k:=\frac{\|f_0\|_1}{L}+\e_k=\frac{\|f_{\e_k}(t)\|_1}{L}\quad\text{ and}\quad B_k:=\frac{\|g_0\|_1}{L}+\e_k=\frac{\|g_{\e_k}(t)\|_1}{L}
\]
for all $k\in\N$ { and} $t\in[0,\infty).$ 
{ Introducing}
 \begin{align*}
\mathcal{F}_k:=&\int_0^L \left[ \left(f_{\e_k}\ln\left( \frac{f_{\e_k}}{A_k} \right) -f_{\e_k}+A_k\right)+\frac{R}{R_\mu}\left(g_{\e_k}\ln\left( \frac{g_{\e_k}}{B_k} \right)-g_{\e_k}+B_k\right) \right]\, dx\\[1ex]
&+\frac{1}{2}\int_0^L \left\{ (f_{\e_k}-A_k)^2+R\left[(f_{\e_k}-A_k)^2+(g_{\e_k}-B_k)^2+(g_{\e_k}-B_k)(F_{\e_k}-A_k)\right. \right.\\[1ex]
&\hspace{3cm}\left. \left.+(f_{\e_k}-A_k)(G_{\e_k}-B_k)\right] \right\}\, dx,
\end{align*}
{ we infer from \eqref{eq:global}~(c) and the proofs of Lemma~\ref{L:3} and~\ref{L:4.5} that}
 \begin{align*}
 \frac{d\mathcal{F}_k}{dt}=&\int_0^L\left[ \p_tf_{\e_k}\ln f_{\e_k}+\frac{R}{R_\mu}\p_tg_{\e_k}\ln g_{\e_k} \right]\, dx\\[1ex]
&+ \frac{1}{2} \frac{d}{dt}\int_0^L \left\{ (1+R)f_{\e_k}^2+R\left[g_{\e_k}^2+f_{\e_k}G_{\e_k}+g_{\e_k}F_{\e_k}\right] \right\}\, dx\\[1ex]
 \leq&-\frac{1}{2}\|\p_xf_{\e_k}\|_2^2-\frac{R}{1+2R}\|\p_x g_{\e_k}\|_2^2 +\e_k C_3 \left( \|\p_xf_{\e_k}\|_2^2+\|\p_xg_{\e_k}\|_2^2 \right)\\[1ex]
 \leq &-\frac{1}{3} \|\p_xf_{\e_k}\|_2^2-\frac{R}{2+2R}\|\p_x g_{\e_k}\|_2^2
 \end{align*}
 provided $k$ is large enough.
 Using { the Poincar\'e-Wirtinger} inequality, we find a positive constant $C_5$ such that
  \begin{equation}\label{eq:BB}
 \frac{d\mathcal{F}_k}{dt} \leq -C_5\ \left( \|f_{\e_k}-A_k\|_2^2+\| g_{\e_k}-B_k\|_2^2 \right)
 \end{equation}
 for large $k$ and all $t\in(0,\infty).$
We show now that the right-hand side of \eqref{eq:BB} can be bounded by $-\omega\mathcal{F}_k$ { for some small positive number $\omega$}. 
Indeed, { arguing as in Lemma~\ref{L:2.5}}, we find that
\begin{eqnarray}
& &\left| \int_0^L \left[ (g_{\e_k}-B_k)(F_{\e_k}-A_k)+(f_{\e_k}-A_k)(G_{\e_k}-B_k) \right]\, dx \right|\nonumber\\[1ex]
& & \phantom{space}{ \leq \|g_{\e_k}-B_k\|_2 \|F_{\e_k}-A_k\|_2 + \|f_{\e_k}-A_k\|_2 \|G_{\e_k}-B_k\|_2 }\nonumber\\[1ex]
& &\phantom{space}{ \leq 2\|g_{\e_k}-B_k\|_2 \|f_{\e_k}-A_k\|_2} \leq \|f_{\e_k}-A_k\|_2^2+\| g_{\e_k}-B_k\|_2^2.\label{eq:pp1} 
 \end{eqnarray}
{ Recalling \eqref{eq:eq}, we end up with}
 \begin{equation}\label{eq:pp2} 
\begin{aligned}
&\|f_{\e_k}-A_k\|_2^2+ \|g_{\e_k}-B_k\|_2^2 =\int_0^L \left( A_k^2\left|\frac{f_{\e_k}}{A_k}-1\right|^2+B_k\left|\frac{g_{\e_k}}{B_k}-1\right|^2 \right)\, dx\\[1ex]
&\phantom{space}\geq \int_0^L \left[ A_k^2\left(\frac{f_{\e_k}}{A_k}\ln\left( \frac{f_{\e_k}}{A_k} \right)-\frac{f_{\e_k}}{A_k}+1\right)+B_k^2\left(\frac{g_{\e_k}}{B_k}\ln\left( \frac{g_{\e_k}}{B_k} \right) -\frac{g_{\e_k}}{B_k}+1\right) \right]\, dx\\[1ex]
&\phantom{space}\geq \! \min_k\left\{A_k,\frac{R_\mu B_k}{R}\right\}\int_0^L \left[ \left(f_{\e_k}\ln\left( \frac{f_{\e_k}}{A_k} \right) -f_{\e_k}+A_k\right)+\frac{R}{R_\mu} \left(g_{\e_k}\ln\left( \frac{g_{\e_k}}{B_k} \right) - g_{\e_k}+B_k\right) \right]\, dx. 
 \end{aligned}
 \end{equation}
 Combining   \eqref{eq:BB}, \eqref{eq:pp1}, and \eqref{eq:pp2}, we conclude that if $\|f_0\|_1>0$ and $\|g_0\|_1>0,$  then
 \begin{equation*}
 \frac{d\mathcal{F}_k}{dt}(t) \leq -\omega\mathcal{F}_k(t)
 \end{equation*}
for some positive constant $\omega$ and $k$ sufficiently large.
Whence, 
\[
\|f_{\e_k}(t)-A_k\|_{L_2}+\|g_{\e_k}(t)-B_k\|_{L_2}\leq Ce^{-\omega t},
\] 
which yields, for $k\to\infty$, the desired estimate { by \eqref{eq:func} and \eqref{eq:ident}}, as stated in Theorem~\ref{T:2}.

If $f_0=0$ [resp. $g_0=0$], then $f=0$ [resp. $g=0$], while  $g$ [resp. $f$] is a weak solution of the one-dimensional  porous medium equation  and converges therefore even in the $L_\infty-$norm to
flat equilibria (if $f_0=0$, then uniqueness of solutions to \eqref{eq:RS1} implies that   $f_\e=\e$ and $\p_tg_\e=R_\mu(g_\e\p_xg_\e)$), cf. \cite[Theorem 20.16]{Vaz}.
Convergence in this stronger norm is due to the fact that comparison methods may be used for the one-dimensional porous media equation, while for our system they fail because of the structure of the system.\\[3ex]

\end{document}